\theoremstyle{plain} 
\newtheorem{theorem}{\indent\sc Theorem}[section]
\newtheorem{lemma}[theorem]{\indent\sc Lemma}
\newtheorem{corollary}[theorem]{\indent\sc Corollary}
\newtheorem{proposition}[theorem]{\indent\sc Proposition}
\theoremstyle{definition} 
\newtheorem{remark}[theorem]{\indent\sc Remark}
\newtheorem{example}[theorem]{\indent\sc Example}
\def\address#1#2{\begingroup
\noindent\parbox[t]{7.8cm}{%
\small{\scshape\ignorespaces#1}\par\vskip1ex
\noindent\small{\itshape E-mail address}%
\/: #2\par\vskip4ex}\hfill%
\endgroup}%
\title{Some applications of modular units} 
\author{
\textsc{Ick Sun Eum, Ja Kyung Koo and Dong Hwa Shin$^*$} 
}
\date{} 
\begin{document}

\maketitle

\footnote{ 
2010 \textit{Mathematics Subject Classification}. Primary 11G16, Secondary 11F03, 11F46.}
\footnote{ 
\textit{Key words and phrases}. Modular units, modular functions, Siegel modular forms.}
\footnote{
\thanks{
The first and second named authors were partially supported by the NRF of Korea
Grant funded by MEST (2012-0000798). $^*$The corresponding author was supported
by the NRF of Korea Grant funded by the Korean Government (2012R1A1A1013132).} }

\begin{abstract}
We show that a weakly holomorphic modular function can be written as a sum of modular units of higher level.
We further find a necessary and sufficient condition for a Siegel modular function of degree $g$
to have
neither zero nor pole on the domain when restricted to certain subset of the Siegel upper half-space $\mathbb{H}_g$.
\end{abstract}

\section {Introduction}

Let $g$ be a positive integer. We let
\begin{equation*}
\mathbb{H}_g =\{Z\in
\mathrm{Mat}_g(\mathbb{C})~|~{^t}Z=Z,~\mathrm{Im}(Z)~\textrm{is
positive definite}\}
\end{equation*}
be the Siegel upper half-space of degree $g$
on which the symplectic group
\begin{equation*}
\mathrm{Sp}_{g}(\mathbb{Z})=\{\gamma\in\mathrm{GL}_{2g}(\mathbb{Z})~|~ {^t}\gamma
J\gamma=J \}\quad\textrm{with}~J=\left[\begin{matrix}0 & -I_g\\I_g & 0
\end{matrix}\right]
\end{equation*}
acts by the rule
\begin{equation*}
\left[\begin{matrix}A & B\\C&D\end{matrix}\right](Z)
=(AZ+B)(CZ+D)^{-1},
\end{equation*}
where $A,B,C,D$ are $g\times g$ block matrices. For a positive integer $N$ we further let
\begin{equation*}
\Gamma(N)=\{\gamma\in\mathrm{Sp}_{g}(\mathbb{Z})~|~ \gamma\equiv
I_{2g}\pmod{N}\}
\end{equation*}
be the congruence subgroup modulo $N$
of the group $\mathrm{Sp}_{g}(\mathbb{Z})$.
In particular, when $g=1$, $\mathbb{H}_g$ becomes the upper half-plane $\mathbb{H}=\{
\tau\in\mathbb{C}~|~\mathrm{Im}(\tau)>0\}$
 and $\mathrm{Sp}_{g}(\mathbb{Z})=\mathrm{SL}_2(\mathbb{Z})$ acts on it by fractional
 linear transformations.
\par
Define a subset $\mathbb{H}_g^\mathrm{diag}$ of $\mathbb{H}_g$ by
\begin{equation*}
\mathbb{H}_g^\mathrm{diag}=\{\mathrm{diag}(\tau_1,\tau_2,\ldots,\tau_g)~|~\tau_1,\tau_2,\ldots,\tau_g\in\mathbb{H}\},
\end{equation*}
where
$\mathrm{diag}(\tau_1,\tau_2,\ldots,\tau_g)$ stands for the $g\times g$ diagonal matrix whose diagonal entries are
$\tau_1,\tau_2,\ldots,\tau_g$. If $g=1$, then $\mathbb{H}_g^\mathrm{diag}$ is nothing but $\mathbb{H}$.
Let $f(Z)$ be a (meromorphic) Siegel modular function of degree $g$ and level $N$ (over $\mathbb{C}$),
namely $f(Z)$ is a quotient of two Siegel modular forms of degree $g$ and the same weight so that it is invariant
under $\Gamma(N)$.
When $g=1$, $f$ becomes a usual meromorphic modular function of level $N$.
We shall mainly consider the case where $f$ has neither zero nor pole on $\mathbb{H}_g^\mathrm{diag}$.
\par
Let $X(N)=\overline{\Gamma}(N)\backslash\mathbb{H}^*$ be the
modular curve of level $N$ that is a compact Riemann surface, where
$\overline{\Gamma}(N)=\Gamma(N)/\{\pm I_2\}$ and
$\mathbb{H}^*=\mathbb{H}\cup\mathbb{Q}\cup\{i\infty\}$. We denote its function field by $\mathbb{C}(X(N))$. As is well-known,
$X(1)$ is of genus zero and
$\mathbb{C}(X(1))=\mathbb{C}(j)$, where
\begin{equation*}
j=j(\tau)=q^{-1}+744+196884q+21493760q^2
+864299970q^3+\cdots\quad(q=e^{2\pi i\tau},~i=\sqrt{-1})
\end{equation*}
is the elliptic modular function \cite[Theorem 2.9]{Shimura}.
Furthermore, $\mathbb{C}(X(N))$ is a Galois extension of $\mathbb{C}(X(1))$
whose Galois group is naturally isomorphic to
$\overline{\Gamma}(1)/\overline{\Gamma}(N)$. Let
$\mathcal{O}_N$ be the integral closure of $\mathbb{C}[j]$ in
$\mathbb{C}(X(N))$. We call the invertible elements in $\mathcal{O}_N$
\textit{modular units} of level $N$ (over $\mathbb{C}$), which are precisely those
functions in $\mathbb{C}(X(N))$ having no zeros and poles on
$\mathbb{H}$ \cite[p.36]{K-L}. Kubert and Lang developed in
\cite{K-L} the theory of modular units in terms of Siegel functions
which will be defined in $\S$\ref{sectwo}. (In addition, they require that the Fourier coefficients
of a modular unit of level $N$ lie in the $N$th cyclotomic field.)
In this paper we shall first describe $\mathcal{O}_N$ in view of
modular units when $N\equiv0\pmod{4}$ (Theorem \ref{closure}),
and then conclude that any weakly holomorphic modular function can be expressed as
 a sum of modular units of higher level (Corollary \ref{sum}). Here, a function is said to be
 weakly holomorphic if it is holomorphic on $\mathbb{H}$.
 \par
On the other hand, suppose that
$g$ and $N$ are two positive integers $\geq2$, and
 let $f(Z)$ be a Siegel modular function of degree $g$ and level $N$.
We shall further prove that $f(Z)$ has neither zero nor pole on $\mathbb{H}_g^\mathrm{diag}$ if and only if
$f(\mathrm{diag}(\tau_1,\tau_2,\ldots,\tau_g))$ is a product of $g$ modular units of variables
$\tau_1,\tau_2,\ldots,\tau_g\in\mathbb{H}$, respectively
(Theorem \ref{prod2}). To this end we shall examine some necessary basic properties of modular units in $\S$\ref{sectwo}.
And, we shall show that certain quotient of theta constants of degree $g$ on
$\mathbb{H}_g^\mathrm{diag}$ is a product of modular units (Example \ref{thetaconstant}).

\section {Properties of modular units}\label{sectwo}

For a positive integer $N$ we denote the group of all modular units of level $N$ by $V_N$ (that is, $V_N=\mathcal{O}_N^\times$), which contains $\mathbb{C}^\times$ as a subgroup.
In this section we shall develop some necessary properties about modular units which will be used in later sections.

\begin{lemma}\label{holj}
If $f$ is a weakly holomorphic modular function of level $1$, then it
is a polynomial of $j$ over $\mathbb{C}$, that is $f\in\mathbb{C}[j]$.
\end{lemma}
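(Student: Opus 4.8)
The plan is to use the well-known structure of the function field $\mathbb{C}(X(1)) = \mathbb{C}(j)$ together with the fact that $j$ has a single simple pole at the cusp $i\infty$ of $X(1)$. First I would recall that $X(1) = \overline{\Gamma}(1)\backslash\mathbb{H}^*$ is a compact Riemann surface of genus zero with exactly one cusp, represented by $i\infty$, and that $j$ is a coordinate on $X(1)$ realizing an isomorphism $X(1) \xrightarrow{\sim} \mathbb{P}^1(\mathbb{C})$ sending $i\infty \mapsto \infty$. Since $f$ is a modular function of level $1$, it lies in $\mathbb{C}(X(1)) = \mathbb{C}(j)$, so we may write $f = P(j)/Q(j)$ with $P, Q \in \mathbb{C}[j]$ coprime.

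Next I would translate the weak holomorphy hypothesis into a statement about poles on $X(1)$. By assumption $f$ is holomorphic on all of $\mathbb{H}$, hence $f$, viewed as a function on $X(1)$, can have a pole only at the cusp $i\infty$. Under the identification with $\mathbb{P}^1$ via $j$, this means the rational function $P(j)/Q(j)$ has no pole at any finite value of $j$, i.e. $Q(j)$ has no roots in $\mathbb{C}$. Since $\mathbb{C}$ is algebraically closed, a nonconstant $Q$ would have a root, so $Q$ must be constant, giving $f \in \mathbb{C}[j]$ after absorbing the constant.

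An alternative, more self-contained route avoids invoking the isomorphism $X(1)\cong\mathbb{P}^1$ explicitly: given $f = P(j)/Q(j)$ in lowest terms with $Q$ nonconstant, pick a root $j_0 \in \mathbb{C}$ of $Q$; since $j:\mathbb{H}\to\mathbb{C}$ is surjective, choose $\tau_0 \in \mathbb{H}$ with $j(\tau_0) = j_0$, and then $f$ has a pole at $\tau_0$, contradicting weak holomorphy. Either way the conclusion is immediate once the surjectivity of $j$ on $\mathbb{H}$ and the equality $\mathbb{C}(X(1)) = \mathbb{C}(j)$ are in hand.

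I do not anticipate a genuine obstacle here: both ingredients—that $\mathbb{C}(X(1)) = \mathbb{C}(j)$ and that $j$ maps $\mathbb{H}$ onto $\mathbb{C}$ with $i\infty$ the unique cusp—are classical and are already cited in the excerpt via \cite[Theorem 2.9]{Shimura}. The only point requiring a word of care is making sure that ``holomorphic on $\mathbb{H}$'' is correctly interpreted as ``holomorphic at every point of the affine curve $X(1)\setminus\{i\infty\}$,'' which is exactly what allows the pole to be pushed entirely to the cusp and hence forces the denominator to be a nonzero constant.
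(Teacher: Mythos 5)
Your argument is correct. Note, however, that the paper offers no proof of its own here: it simply cites \cite[Theorem 2]{Lang}, so there is nothing to compare line by line. The classical proof in Lang proceeds by a different device, namely subtracting a suitable polynomial in $j$ to cancel the principal part of the $q$-expansion of $f$ at $i\infty$, which leaves a holomorphic function on the compact curve $X(1)$ that must be constant. Your route instead takes $\mathbb{C}(X(1))=\mathbb{C}(j)$ as the starting point, writes $f=P(j)/Q(j)$ in lowest terms, and uses the surjectivity of $j:\mathbb{H}\to\mathbb{C}$ to locate a pole of $f$ on $\mathbb{H}$ whenever $Q$ is nonconstant; this is a perfectly valid and arguably shorter argument, and both of its inputs are facts the paper already records (the identity $\mathbb{C}(X(1))=\mathbb{C}(j)$ and the bijection $j:\overline{\Gamma}(1)\backslash\mathbb{H}\to\mathbb{C}$ in the Remark following the lemma). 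The one point you are implicitly using, and should keep in mind, is that a ``weakly holomorphic modular function of level $1$'' is in particular an element of $\mathbb{C}(X(1))$, i.e.\ it is meromorphic at the cusp; this is built into the paper's conventions, so your proof is complete as written. Incidentally, your argument is essentially the same as the one the authors themselves use for the converse direction of Proposition \ref{characterize}, so it fits the paper's style well.
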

\begin{proof}
\cite[Theorem 2]{Lang}.
\end{proof}

\begin{remark}
Note that
$j$ gives rise to a bijection $j:\overline{\Gamma}(1)\backslash\mathbb{H}\rightarrow
\mathbb{C}$ \cite[Chapter 3 Theorem 4]{Lang}.
\end{remark}

\begin{proposition}\label{characterize}
Let $h\in\mathbb{C}(X(N))$. Then, $h$ is weakly holomorphic
 if and only if $h$ is integral over $\mathbb{C}[j]$.
\end{proposition}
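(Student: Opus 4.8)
The plan is to prove both implications using the standard dictionary between integral closure and behavior of functions at points of the curve $X(N)$, together with Lemma \ref{holj} to control the situation at the single place of $X(1)$ lying over $j=\infty$.

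First I would establish the easy direction: if $h$ is integral over $\mathbb{C}[j]$, then $h$ satisfies a monic polynomial relation $h^n + a_{n-1}(j)h^{n-1} + \cdots + a_0(j) = 0$ with $a_k(j)\in\mathbb{C}[j]$. Since each $a_k(j)$ is a polynomial in $j$, it is holomorphic on $\mathbb{H}$ (as $j$ itself is). If $h$ had a pole at some point $\tau_0\in\mathbb{H}$, then dividing the relation by $h^n$ and letting $\tau\to\tau_0$ would force a contradiction in the usual way (the term $1$ cannot be cancelled by terms $a_k(j)/h^{n-k}$ that all tend to $0$, since $j$ stays finite at $\tau_0$). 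Hence $h$ is holomorphic on $\mathbb{H}$, i.e. weakly holomorphic.

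For the converse, suppose $h\in\mathbb{C}(X(N))$ is weakly holomorphic. I would consider the Galois conjugates $h^\sigma$ for $\sigma\in\mathrm{Gal}(\mathbb{C}(X(N))/\mathbb{C}(X(1)))\cong\overline{\Gamma}(1)/\overline{\Gamma}(N)$; concretely these are the functions $h\circ\alpha$ for $\alpha$ ranging over coset representatives in $\overline{\Gamma}(1)$. Each conjugate is again holomorphic on $\mathbb{H}$, because $\alpha$ maps $\mathbb{H}$ bijectively to itself. Therefore the elementary symmetric functions of the $h^\sigma$ are holomorphic on $\mathbb{H}$ and invariant under $\overline{\Gamma}(1)$, hence they are weakly holomorphic modular functions of level $1$. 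By Lemma \ref{holj} each such symmetric function lies in $\mathbb{C}[j]$. Consequently $h$ is a root of the monic polynomial $\prod_\sigma (X - h^\sigma)\in\mathbb{C}[j][X]$, which exhibits $h$ as integral over $\mathbb{C}[j]$.

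The main point requiring care — really the only nontrivial one — is verifying that the Galois conjugates of $h$ are given by $h\circ\alpha$ and that these remain holomorphic on $\mathbb{H}$; this rests on the fact recorded in the excerpt that $\mathrm{Gal}(\mathbb{C}(X(N))/\mathbb{C}(X(1)))\cong\overline{\Gamma}(1)/\overline{\Gamma}(N)$ acts by $\tau\mapsto\alpha(\tau)$, and that $\overline{\Gamma}(1)$ preserves $\mathbb{H}$. Everything else is the routine integral-closure argument. One should also note that holomorphy at the cusps is automatic for elements of $\mathbb{C}(X(N))$ viewed as rational functions on the compact curve, so "weakly holomorphic" is precisely the condition of no poles on $\mathbb{H}$, matching what we need.
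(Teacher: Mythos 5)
Your proposal is correct and follows essentially the same route as the paper: the integral-over-$\mathbb{C}[j]$ $\Rightarrow$ weakly holomorphic direction via dividing the monic relation by $h^n$ at a putative pole, and the converse via the monic polynomial $\prod_{\gamma\in\overline{\Gamma}(1)/\overline{\Gamma}(N)}(X-h\circ\gamma)$ whose coefficients lie in $\mathbb{C}[j]$ by Lemma \ref{holj}. No substantive differences.
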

\begin{proof}
Assume that $h=h(\tau)$ is weakly holomorphic. We consider the following monic polynomial of $X$
\begin{equation*}
P(X)=\prod_{\gamma\in\overline{\Gamma}(1)/\overline{\Gamma}(N)}(X-h\circ\gamma).
\end{equation*}
Since $\mathrm{Gal}(\mathbb{C}(X(N))/\mathbb{C}(X(1)))\simeq
\overline{\Gamma}(1)/\overline{\Gamma}(N)$,
every coefficient of $P(X)$ belongs to $\mathbb{C}(X(1))$ and is holomorphic on $\mathbb{H}$.
So, it is a polynomial of $j$ over $\mathbb{C}$ by Lemma \ref{holj}. This shows that
$h$ is integral over $\mathbb{C}[j]$.
\par
Conversely, assume that $h$ is integral over $\mathbb{C}[j]$. Then $h$ is a zero of a monic polynomial
\begin{equation*}
X^n+P_{n-1}(j)X^{n-1}+\cdots+P_1(j)X+P_0(j),
\end{equation*}
where $n\geq1$ and $P_{n-1}(X),\ldots,P_1(X),P_0(X)\in\mathbb{C}[j][X]$. Suppose on the contrary that $h$ has a pole
at $\tau_0\in\mathbb{H}$ (so, $h\neq0$). Since $h$ satisfies
\begin{equation*}
h^n+P_{n-1}(j)h^{n-1}+\cdots+P_1(j)h+P_0(j)=0,
\end{equation*}
we get by dividing both sides by $h^n$ and substituting $\tau=\tau_0$
\begin{equation*}
1+P_{n-1}(j(\tau_0))(1/h(\tau_0))+\cdots+P_1(j(\tau_0))(1/h(\tau_0))^{n-1}+P_0(j(\tau_0))(1/h(\tau_0))^n=0.
\end{equation*}
This yields a contradiction $1=0$ because $j(\tau_0)\in\mathbb{C}$ and $1/h(\tau_0)=0$.
Therefore $h$ must be weakly holomorphic.
\end{proof}

\begin{remark}
By definition, $h\in\mathbb{C}(X(N))$ is a modular unit if and only if
both $h$ and $h^{-1}$ are integral over $\mathbb{C}[j]$. Hence, Proposition \ref{characterize}
gives an elementary proof of the well-known fact that
$h$ is a modular unit if and only if it has no zeros and poles on $\mathbb{H}$ \cite[p.36]{K-L}.
\end{remark}

Given a vector
$\left[\begin{matrix}r\\s\end{matrix}\right]\in(1/N)\mathbb{Z}^2-\mathbb{Z}^2$ for $N\geq2$,
the Siegel function
$g_{\left[\begin{smallmatrix}r\\s\end{smallmatrix}\right]}(\tau)$ is
defined on $\mathbb{H}$ by the following infinite product
\begin{equation}\label{Siegel}
g_{\left[\begin{smallmatrix}r\\s\end{smallmatrix}\right]}(\tau)=-q^{(1/2)
(r^2-r+1/6)}e^{\pi is(r-1)}(1-q^re^{2\pi
is})\prod_{n=1}^{\infty}(1-q^{n+r}e^{2\pi is})(1-q^{n-r}e^{-2\pi
is}),
\end{equation}
which is a weakly holomorphic modular function of level $12N^2$ \cite[Chapter 3 Theorem 5.2]{K-L}.

\begin{lemma}\label{ranklemma}
Suppose $N\geq2$ and let $n$ be the number of inequivalent cusps of $X(N)$.
Then, the rank of the subgroup of $V_N/\mathbb{C}^\times$ generated by $g_{\left[\begin{smallmatrix}r\\s\end{smallmatrix}\right]}(\tau)^{12N}$ for
$\left[\begin{matrix}r\\s\end{matrix}\right]\in(1/N)\mathbb{Z}^2-\mathbb{Z}^2$ is $n-1$.
\end{lemma}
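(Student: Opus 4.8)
The plan is to identify the subgroup generated by the $g_{[r,s]}^{12N}$ with (a subgroup of finite index in) the full unit group $V_N/\mathbb{C}^\times$ and to compute the rank of the latter by a divisor-of-cusps argument. First I would recall that any modular unit of level $N$ has a divisor supported on the cusps of $X(N)$, so there is a well-defined homomorphism
\begin{equation*}
\mathrm{div}: V_N/\mathbb{C}^\times \longrightarrow \mathrm{Div}^0_{\mathrm{cusps}}(X(N)) \subset \mathbb{Z}^n,
\end{equation*}
sending $h$ to its cuspidal divisor, which has total degree $0$. Since a nonzero modular function is determined up to a constant by its divisor, this map is injective; hence $V_N/\mathbb{C}^\times$ is a free abelian group of rank at most $n-1$. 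To get the lower bound I would invoke the theorem of Kubert–Lang that the Siegel functions $g_{[r,s]}$ generate a subgroup of $V_N$ of finite index (the cuspidal divisor class group $\mathrm{Div}^0_{\mathrm{cusps}}(X(N))/\mathrm{div}(V_N)$ is finite \cite[Chapter 4]{K-L}), so already the $g_{[r,s]}$ themselves span a rank-$(n-1)$ subgroup of $V_N/\mathbb{C}^\times$ over $\mathbb{Q}$.

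The remaining point is the passage from $g_{[r,s]}$ to $g_{[r,s]}^{12N}$: raising a generating set to a fixed power does not change the $\mathbb{Q}$-rank of the subgroup it generates, since $\langle x_1^{12N},\ldots,x_m^{12N}\rangle \otimes \mathbb{Q} = \langle x_1,\ldots,x_m\rangle \otimes \mathbb{Q}$ inside any torsion-free (here even free) abelian group. Thus the rank is exactly that of the group generated by the $g_{[r,s]}$ modulo constants. I should also note, to be careful, why the $12N$-th powers are used at all: the $g_{[r,s]}$ a priori lie in $V_{12N^2}$ rather than $V_N$, and one needs $g_{[r,s]}^{12N}\in \mathbb{C}(X(N))$; this is a standard consequence of the transformation formula for Siegel functions \cite[Chapter 2, \S1]{K-L}, and combined with the earlier remark that a modular function with no zeros or poles on $\mathbb{H}$ is a modular unit, it places $g_{[r,s]}^{12N}$ genuinely in $V_N$.

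The main obstacle is really the lower bound, i.e. establishing that the Siegel functions span a subgroup of full rank $n-1$ rather than something smaller; this is the content of the Kubert–Lang structure theory (explicit construction of enough independent relations among cuspidal divisors, or equivalently finiteness of the cuspidal class group), and I would quote it rather than reprove it. The upper bound is elementary given injectivity of $\mathrm{div}$, and the power-$12N$ bookkeeping is routine. Assembling these three pieces — injectivity of $\mathrm{div}$ for the upper bound, Kubert–Lang for the lower bound, and the power-invariance of $\mathbb{Q}$-rank — yields that the rank is exactly $n-1$.
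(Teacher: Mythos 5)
Your proposal is correct in substance, but it takes a different (and in fact heavier) route than the paper, whose entire proof is a citation of the exact statement, namely \cite[Chapter 2, Theorem 3.1]{K-L}. You instead split the claim into an elementary upper bound (injectivity of the cuspidal divisor map forces rank $\leq n-1$; this is precisely the argument the paper uses later, in Proposition \ref{rank}, to pass from the lemma to the rank of all of $V_N/\mathbb{C}^\times$) and a lower bound obtained by combining two deeper Kubert--Lang results: finiteness of the cuspidal divisor class group and the fact that Siegel functions generate a finite-index subgroup of the modular units. The logic is sound, and your bookkeeping on why one takes $12N$-th powers (to land in $\mathbb{C}(X(N))$) and why raising generators to a fixed power preserves the $\mathbb{Q}$-rank is correct. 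The one caveat worth flagging is a latent circularity of sources rather than of logic: in Kubert--Lang the finiteness of the cuspidal class group and the finite-index generation of units by Siegel functions are themselves deduced from the very rank computation you are trying to establish, which is proved directly by computing the orders of $g_{\left[\begin{smallmatrix}r\\s\end{smallmatrix}\right]}$ at the cusps via the Bernoulli polynomial $\mathbf{B}_2$ and a Frobenius-determinant (character-sum nonvanishing) argument. So if the goal were a self-contained derivation you would still need that determinant computation for the lower bound; as a citation-level argument, however, your assembly of the three pieces is valid and yields the stated rank $n-1$.
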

\begin{proof}
\cite[Chapter 2 Theorem 3.1]{K-L}.
\end{proof}

\begin{remark}
We have the formula
\begin{equation*}
n=|\overline{\Gamma}(1)/\overline{\Gamma}(N)|/N=\left\{\begin{array}{ll}3 & \textrm{if}~N=2,\\
(N^2/2)\prod_{p|N}(1-p^{-2}) & \textrm{if}~N>2
\end{array}\right.
\end{equation*}
\cite[pp.22--23]{Shimura}.
\end{remark}

\begin{proposition}\label{rank}
With the same assumption and notation as in \textup{Lemma \ref{ranklemma}}, $V_N/\mathbb{C}^\times$ is a free abelian group of rank $n-1$.
\end{proposition}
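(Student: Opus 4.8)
The plan is to identify $V_N/\mathbb{C}^\times$ with a subgroup of a concrete finitely generated free abelian group, and then to pin down its rank using Lemma \ref{ranklemma}. Write $c_1,\ldots,c_n$ for the $n$ inequivalent cusps of the compact Riemann surface $X(N)$, and let $D_0$ denote the group of degree-zero divisors on $X(N)$ supported on $\{c_1,\ldots,c_n\}$; since the divisors supported on these $n$ points form a copy of $\mathbb{Z}^n$ and the degree map cuts this down by one, $D_0$ is free abelian of rank $n-1$. The first step is to check that the divisor map $h\mapsto\mathrm{div}(h)$ carries $V_N$ into $D_0$: by the remark following Proposition \ref{characterize} (indeed, essentially by definition) a modular unit of level $N$ has neither zero nor pole on $\mathbb{H}$, so $\mathrm{div}(h)$ is supported on the cusps, and being a principal divisor on a compact Riemann surface it has degree $0$.

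The second step is to determine the kernel of the homomorphism $V_N\to D_0$. Constants clearly have trivial divisor; conversely, if $h\in V_N$ satisfies $\mathrm{div}(h)=0$, then $h$ has no zeros and no poles anywhere on $X(N)$, hence is a nowhere-vanishing holomorphic function on a compact Riemann surface and therefore lies in $\mathbb{C}^\times$. Thus the divisor map induces an injection $V_N/\mathbb{C}^\times\hookrightarrow D_0$. Since a subgroup of a finitely generated free abelian group is again free abelian of no larger rank, this already shows that $V_N/\mathbb{C}^\times$ is free abelian of rank at most $n-1$.

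The final step is to supply the matching lower bound, which is exactly the content of Lemma \ref{ranklemma}: the classes of $g_{\left[\begin{smallmatrix}r\\s\end{smallmatrix}\right]}(\tau)^{12N}$ for $\left[\begin{smallmatrix}r\\s\end{smallmatrix}\right]\in(1/N)\mathbb{Z}^2-\mathbb{Z}^2$ generate a subgroup of $V_N/\mathbb{C}^\times$ of rank $n-1$, so $V_N/\mathbb{C}^\times$ has rank at least $n-1$. Combining the two inequalities yields that $V_N/\mathbb{C}^\times$ is free abelian of rank precisely $n-1$.

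This argument is soft once the right target group $D_0$ is in place, so I do not expect a genuine obstacle; the only points that require care are that $X(N)$ is \emph{compact} — this is what forces principal divisors to have degree zero and what makes a unit with trivial divisor constant — and that the Siegel function powers appearing in Lemma \ref{ranklemma} genuinely belong to $V_N$, which is already part of that cited statement.
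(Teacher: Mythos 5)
Your proposal is correct and follows essentially the same route as the paper: both embed $V_N/\mathbb{C}^\times$ via the divisor map into the degree-zero divisors supported on the $n$ cusps (a free abelian group of rank $n-1$) and then invoke Lemma \ref{ranklemma} for the matching lower bound. Your write-up merely makes explicit two points the paper leaves implicit, namely that the kernel of the divisor map is exactly $\mathbb{C}^\times$ and that compactness of $X(N)$ is what forces principal divisors to have degree zero.
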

\begin{proof}
Let $\infty_1,\infty_2,\ldots,\infty_n$ be the inequivalent cusps of $X(N)$, and let $\mathcal{D}_N$ be the free abelian
group of rank $n$ generated by these cusps. Then, an element of $\mathcal{D}_N$ is uniquely written as
\begin{equation*}
m_1(\infty_1)+m_2(\infty_2)+\cdots+m_n(\infty_n)
\quad\textrm{for some integers}~m_1,m_2,\ldots,m_n.
\end{equation*}
Now, we consider a (well-defined) injective homomorphism
\begin{eqnarray*}
V_N/\mathbb{C}^\times&\rightarrow&\mathcal{D}_N\\
h&\mapsto&\mathrm{div}(h).
\end{eqnarray*}
If $h\in V_N/\mathbb{C}^\times$ has $\mathrm{div}(h)=\sum_{k=1}^n m_k(\infty_k)$,
then we get the relation $\sum_{k=1}^n m_k=0$.
Hence $V_N/\mathbb{C}^\times$ is a free abelian group of rank $\leq n-1$.
Thus it follows from Lemma \ref{ranklemma} that the rank of $V_N/\mathbb{C}^\times$ is exactly $n-1$.
\end{proof}

\begin{remark}\label{V_1}
Since every cusp of
$X(1)$ is equivalent to $i\infty$ \cite[p.14]{Shimura}, if $h\in V_1$ then
$\mathrm{div}(h)=m(i\infty)$ for some integer $m$.
On the other hand, now that the sum of the orders of zeros and poles of $h$ is zero, we get $m=0$.
This yields $V_1=\mathbb{C}^\times$.
\end{remark}

\begin{lemma}\label{constsurj}
Let $N\geq2$ and $h\in V_N-\mathbb{C}^\times$. There is a finite subset $S$ of $\mathbb{C}^\times$ so that
the map
\begin{eqnarray*}
\varphi~:~\mathbb{H}&\rightarrow&\mathbb{C}^\times-S\\
\tau&\mapsto&h(\tau)\nonumber
\end{eqnarray*}
is surjective.
\end{lemma}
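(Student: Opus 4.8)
The plan is to view $h$ as a non-constant holomorphic map between Riemann surfaces and use the fact that such maps are open and essentially surjective. Concretely, since $h\in V_N-\mathbb{C}^\times$, by Proposition \ref{characterize} (and the remark following it) $h$ has no zeros and no poles on $\mathbb{H}$, and $h$ is a non-constant element of $\mathbb{C}(X(N))$. Regarding $h$ as a morphism $h:X(N)\rightarrow\mathbb{P}^1(\mathbb{C})$ of compact Riemann surfaces, this map is surjective, finite, and unramified outside a finite set of points of $X(N)$. First I would let $S_0$ be the finite set consisting of $0$, $\infty$, and the finitely many critical values of $h$, together with the images under $h$ of the finitely many cusps of $X(N)$; all of these are finite in number because $h$ is a finite morphism of compact Riemann surfaces and a non-constant holomorphic map has only finitely many ramification points.

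Next I would set $S=(S_0\cap\mathbb{C}^\times)-\{0\}$, a finite subset of $\mathbb{C}^\times$, and check that $\varphi:\mathbb{H}\rightarrow\mathbb{C}^\times-S$, $\tau\mapsto h(\tau)$, is well-defined and surjective. Well-definedness is immediate: $h$ takes no value in $\{0,\infty\}$ on $\mathbb{H}$ (it is a modular unit), so $h(\tau)\in\mathbb{C}^\times$, and $h(\tau)\notin S$ since $S$ consists of critical values and images of cusps while $\tau\in\mathbb{H}$ need not be excluded — but one must be slightly careful here: a point of $\mathbb{H}$ could still map to a critical value. The clean fix is to take $S$ large enough to also remove no points; instead the right formulation is that for any $c\in\mathbb{C}^\times-S$, the fiber $h^{-1}(c)\subset X(N)$ consists of exactly $\deg(h)$ points, none of which is a cusp (since $c$ avoids the images of cusps) and hence all lie in $\overline{\Gamma}(N)\backslash\mathbb{H}$; lifting any one of them to $\mathbb{H}$ gives $\tau$ with $\varphi(\tau)=c$. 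This proves surjectivity.

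The main technical point — and the step I would be most careful about — is finiteness of the relevant exceptional set, i.e. that $h:X(N)\rightarrow\mathbb{P}^1(\mathbb{C})$ really is a finite morphism with only finitely many branch points. This follows from the general theory: $X(N)$ is a compact Riemann surface, $h$ is a non-constant meromorphic function on it (non-constant because $h\notin\mathbb{C}^\times=V_1\subseteq\mathbb{C}(X(N))$ and $h$ is not a nonzero constant by hypothesis), and a non-constant holomorphic map of compact Riemann surfaces is a finite branched cover; its branch locus is the image of its (finite) ramification divisor. The number of cusps is finite by the remark after Lemma \ref{ranklemma}. Hence $S_0$, and therefore $S$, is finite. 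I do not expect any genuinely hard obstacle; the only thing to guard against is sloppiness in which points get thrown into $S$ — one needs $S$ to contain the images of the cusps (so that every fiber over $\mathbb{C}^\times-S$ sits inside the affine part coming from $\mathbb{H}$), but one does \emph{not} need to remove critical values for surjectivity, only to make the fiber count exactly $\deg(h)$; removing them is harmless and makes the bookkeeping cleaner.
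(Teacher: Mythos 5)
Your approach is the same as the paper's: extend $h$ to a surjective holomorphic map $X(N)\rightarrow\mathbb{P}^1(\mathbb{C})$ of compact Riemann surfaces and observe that any value of $\mathbb{C}^\times$ missed on $\mathbb{H}$ must be picked up at one of the finitely many cusps. Your surjectivity argument is correct, and in fact you do not need the critical values or the fiber count at all: surjectivity of $X(N)\rightarrow\mathbb{P}^1(\mathbb{C})$ already produces a preimage of $c$, and excluding the cusp-values forces that preimage into $\overline{\Gamma}(N)\backslash\mathbb{H}$, which is all that is required.

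The one genuine defect is the well-definedness of $\varphi$ as a map into $\mathbb{C}^\times-S$, which you notice but do not repair. With your $S=(S_0\cap\mathbb{C}^\times)-\{0\}$, a point $\tau_0\in\mathbb{H}$ may well satisfy $h(\tau_0)\in S$: a critical value of $h$ can be attained at a ramification point lying over $\overline{\Gamma}(N)\backslash\mathbb{H}$, and a single fiber of $h$ can contain both a cusp and interior points, so a cusp-value can also be attained on $\mathbb{H}$. In either case $\varphi(\tau_0)\notin\mathbb{C}^\times-S$ and the map as you have set it up does not exist. The sentence ``take $S$ large enough to also remove no points'' does not resolve this; the fix is the opposite of enlarging $S$, namely shrinking it by deleting every value that $h$ actually attains on $\mathbb{H}$. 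This is exactly the paper's choice, $S=\{h(P)~|~P~\textrm{a cusp of}~X(N)\}-(\{0,\infty\}\cup h(\mathbb{H}))$, which is still finite (being contained in the set of cusp-values), makes $h(\mathbb{H})\subseteq\mathbb{C}^\times-S$ true by construction, and for which your surjectivity argument goes through verbatim.
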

\begin{proof}
Consider the following holomorphic map between compact Riemann surfaces
\begin{eqnarray*}
X(N)&\rightarrow&\mathbb{P}^1(\mathbb{C})\\
\tau&\mapsto&[h(\tau):1].
\end{eqnarray*}
Since $h$ is not a constant, the above map is surjective.
Take a subset
$S$ of $\mathbb{C}^\times$ as
\begin{equation*}
S=\{h(\tau)~|~\tau~\textrm{is a cusp of}~X(N)\}-\{0,\infty,h(\tau)~|~\tau\in\mathbb{H}\}.
\end{equation*}
Since there are only finitely many inequivalent cusps of $X(N)$, it is a finite set. And,
the map $\varphi$ becomes surjective.
\end{proof}

\begin{proposition}\label{notunit}
Let $h$ be a modular unit. Suppose that
\begin{equation}\label{orderassumption}
\mathrm{ord}_q~h\circ\gamma\neq0\quad\textrm{for all}~\gamma\in\mathrm{SL}_2(\mathbb{Z}).
\end{equation}
Then $h-c$  is not a modular unit for any $c\in\mathbb{C}^\times$.
\end{proposition}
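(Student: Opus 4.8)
The plan is to use the hypothesis \eqref{orderassumption} in the form: $h$ has a zero or a pole at \emph{every} cusp of $X(N)$, so the only values $h$ assumes at the cusps are $0$ and $\infty$. Consequently any finite nonzero value in the image of $h$ must already be attained somewhere on $\mathbb{H}$, and this will furnish a zero of $h-c$ inside $\mathbb{H}$, which by the characterization of modular units rules out that $h-c$ is a modular unit.

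First I would record that $h$ is non-constant: if $h\in\mathbb{C}^\times$, then $\mathrm{ord}_q\,h\circ\gamma=0$ for every $\gamma\in\mathrm{SL}_2(\mathbb{Z})$, contradicting \eqref{orderassumption}. In particular, by Remark \ref{V_1}, $h$ cannot be a modular unit of level $1$, so $h\in V_N-\mathbb{C}^\times$ for some $N\geq2$; and for every $c\in\mathbb{C}^\times$ the difference $h-c$ is again a (nonzero) modular function of level $N$. Fix such a $c$. As in the proof of Lemma \ref{constsurj}, the holomorphic map of compact Riemann surfaces
\begin{equation*}
X(N)\longrightarrow\mathbb{P}^1(\mathbb{C}),\qquad\tau\longmapsto[h(\tau):1],
\end{equation*}
is surjective because $h$ is non-constant, so there is a point $P\in X(N)$ with $h(P)=c$. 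I claim $P$ is not a cusp: if $P$ were the cusp $\gamma(i\infty)$ for some $\gamma\in\mathrm{SL}_2(\mathbb{Z})$, then $h\circ\gamma$ would be holomorphic and nonzero at $i\infty$ with value $c$, hence $\mathrm{ord}_q\,h\circ\gamma=0$, contradicting \eqref{orderassumption}. Therefore $P$ corresponds to some $\tau_0\in\mathbb{H}$ with $h(\tau_0)=c$. Thus $h-c$ vanishes at $\tau_0\in\mathbb{H}$; being a nonzero modular function with a zero on $\mathbb{H}$, it is not a modular unit by Proposition \ref{characterize} and the remark following it (equivalently, \cite[p.36]{K-L}). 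Since $c\in\mathbb{C}^\times$ was arbitrary, the proposition follows.

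The only step that genuinely uses the hypothesis — and the one to get right — is the verification that $P$ cannot be a cusp; everything else is routine manipulation with the surjectivity of a non-constant holomorphic map between compact Riemann surfaces and with the fact that a modular unit has no zeros on $\mathbb{H}$. So I do not anticipate a real obstacle here beyond making the cusp bookkeeping (choice of $\gamma$ representing $P$, passing between $h$ near $P$ and the $q$-expansion of $h\circ\gamma$) precise.
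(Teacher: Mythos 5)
Your proof is correct and follows essentially the same route as the paper: both use the surjectivity of the non-constant holomorphic map $X(N)\to\mathbb{P}^1(\mathbb{C})$ induced by $h$, together with the observation that hypothesis (\ref{orderassumption}) forces the values of $h$ at all cusps to be $0$ or $\infty$, so that the value $c$ must be attained at some $\tau_0\in\mathbb{H}$, giving $h-c$ a zero there. Your write-up just makes the cusp-exclusion step slightly more explicit than the paper does.
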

\begin{proof}
Let us consider the holomorphic map between two compact Riemann surfaces
\begin{eqnarray*}
\varphi~:~X(N)&\rightarrow&\mathbb{P}^1(\mathbb{C})\\
\tau&\mapsto&[h(\tau):1].
\end{eqnarray*}
Since $h$ is not a constant by (\ref{orderassumption}), $\varphi$ is surjective.
\par
Now, let $c\in\mathbb{C}^\times$. Since $\varphi$ is surjective and the values of $\varphi$ at the cusps of $X(N)$ are either
$[0:1]$ or $[\infty:1]=[1:0]$ by (\ref{orderassumption}), there exists $\tau_0\in\mathbb{H}$ such that
$\varphi(\tau_0)=[c:1]$. This implies that $h(\tau)-c$ has a zero at $\tau=\tau_0$,
and hence $h-c$ is not a modular unit.
\end{proof}

\begin{example}
Let $N\geq2$ and $\left[\begin{matrix}r\\s\end{matrix}\right]\in(1/N)\mathbb{Z}^2-\mathbb{Z}^2$. Consider the Siegel function
\begin{equation*}
h(\tau)=g_{\left[\begin{smallmatrix}r\\s\end{smallmatrix}\right]}(\tau)^{12N},
\end{equation*}
which is a modular unit of level $N$ by Lemma \ref{ranklemma}. Then we have the following properties:
\begin{itemize}
\item[(i)] $h\circ\gamma=g_{{}^t\gamma\left[\begin{smallmatrix}r\\s\end{smallmatrix}\right]}(\tau)^{12N}$
for any $\gamma\in\mathrm{SL}_2(\mathbb{Z})$ \cite[Chapter 2 Proposition 1.3]{K-L},
\item[(ii)] $\mathrm{ord}_q~h=6N\cdot\mathbf{B}_2(\langle r\rangle)$, where
$\mathbf{B}_2(x)=x^2-x+1/6$ is the second Bernoulli polynomial and $\langle x\rangle$ is the fractional part of $x$ such that
$0\leq\langle x\rangle<1$ for $x\in\mathbb{R}$ \cite[p.31]{K-L},
\item[(iii)] $\mathbf{B}_2(x)\neq0$ for all $x\in\mathbb{Q}$.
\end{itemize}
Thus $h$ satisfies the assumption (\ref{orderassumption}) in Proposition \ref{notunit}.
\end{example}

\begin{remark}
If $h$ does not satisfy the assumption (\ref{orderassumption}),
then
$h-c$ could be a modular unit for some constant $c\in\mathbb{C}^\times$ (see Remark \ref{explicit}).
\end{remark}

\section {Integral closures in modular function fields}

In this section, when $N\equiv0\pmod{4}$ we shall investigate
explicit generators of the integral closure $\mathcal{O}_N$ of $\mathbb{C}[j]$
in $\mathbb{C}(X(N))$ by using the Weierstrass units.
\par
For a lattice $L=[\omega_1,\omega_2]=\mathbb{Z}\omega_1+\mathbb{Z}\omega_2$ in $\mathbb{C}$
 the Weierstrass $\wp$-function is defined by
\begin{equation*}
\wp(z;L)=\frac{1}{z^2}+\sum_{\omega\in L-\{0\}}\bigg(\frac{1}{(z-\omega)^2}
-\frac{1}{\omega^2}\bigg)
\quad(z\in\mathbb{C}).
\end{equation*}

\begin{lemma}\label{wp}
Let $z,w\in\mathbb{C}-L$. Then, $\wp(z;L)=\wp(w;L)$ if and only if $z\equiv\pm w\pmod{L}$.
\end{lemma}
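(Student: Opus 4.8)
The plan is to prove both directions of the equivalence $\wp(z;L)=\wp(w;L)\iff z\equiv\pm w\pmod L$ by exploiting the structure of $\wp$ as an even elliptic function of order $2$. First I would recall two standard facts: $\wp(z;L)$ is periodic with respect to $L$, and it is an even function, $\wp(-z;L)=\wp(z;L)$ — both are immediate from the defining series, since translating $z$ by $\omega_0\in L$ merely reindexes the sum and replacing $z$ by $-z$ does the same after pairing $\omega$ with $-\omega$. These two observations give the easy direction at once: if $z\equiv\pm w\pmod L$, then $\wp(z;L)=\wp(\pm w;L)=\wp(w;L)$.

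For the converse I would use the fact that $\wp(\cdot;L)$, viewed as a meromorphic function on the torus $\mathbb{C}/L$, has exactly one pole, namely a double pole at $0$, and hence is a degree-$2$ covering map $\mathbb{C}/L\to\mathbb{P}^1(\mathbb{C})$. The key step is then a counting argument: for a fixed value $c\in\mathbb{C}$, the equation $\wp(z;L)=c$ has exactly two solutions in $\mathbb{C}/L$ counted with multiplicity. Given $w\in\mathbb{C}-L$, both $w$ and $-w$ are solutions of $\wp(z;L)=\wp(w;L)$ by evenness; if they are inequivalent mod $L$ they account for the two solutions with multiplicity one each, so any $z$ with $\wp(z;L)=\wp(w;L)$ must satisfy $z\equiv\pm w\pmod L$. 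If instead $w\equiv-w\pmod L$ (i.e.\ $w$ is a half-period), then $-w$ is not a new solution, but in that case $\wp'(w;L)=0$ — differentiating $\wp(-z;L)=\wp(z;L)$ gives $-\wp'(-z;L)=\wp'(z;L)$, so $\wp'$ vanishes at any point fixed by $z\mapsto-z$ on the torus — hence $w$ is a solution of multiplicity $\geq2$, thus exactly $2$, and again exhausts the preimage, forcing $z\equiv w\equiv\pm w\pmod L$.

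The main obstacle is justifying the degree/order statement cleanly within the analytic framework of the paper, i.e.\ that an elliptic function with a single double pole takes each value exactly twice. I would handle this either by the classical argument of integrating $\wp'(z)/(\wp(z)-c)$ around the boundary of a fundamental parallelogram — the integral vanishes by periodicity, and the residue theorem then equates the number of zeros of $\wp(z)-c$ inside (with multiplicity) to the number of poles, which is $2$ — or, if one prefers to stay purely algebraic, by noting that $\wp(\cdot;L)\colon\mathbb{C}/L\to\mathbb{P}^1(\mathbb{C})$ is a nonconstant holomorphic map of compact Riemann surfaces whose fiber over $\infty$ is a single point of ramification index $2$, so the map has degree $2$ and every fiber has cardinality $2$ counted with multiplicity. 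Everything else — evenness, periodicity, vanishing of $\wp'$ at half-periods — is routine and follows directly from the series definition.
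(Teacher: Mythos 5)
Your proof is correct: the easy direction from periodicity and evenness, and the converse via the order-$2$ argument (the elliptic function $\wp(z;L)-\wp(w;L)$ has a single double pole on $\mathbb{C}/L$, hence exactly two zeros with multiplicity, which are accounted for by $\pm w$, with the half-period case handled by $\wp'(w;L)=0$). The paper itself gives no argument but merely cites Silverman, and your proof is essentially the standard one found in that reference, so there is nothing to compare beyond noting that you have supplied the details the paper outsources.
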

\begin{proof}
\cite[Chaper IV $\S$3]{Silverman}.
\end{proof}

Let $N\geq2$. For a vector $\left[\begin{matrix}r\\s\end{matrix}\right]\in(1/N)\mathbb{Z}^2-\mathbb{Z}^2$ we define
\begin{equation*}
\wp_{\left[\begin{smallmatrix}r\\s\end{smallmatrix}\right]}(\tau)=\wp(r\tau+s;[\tau,1])\quad(\tau\in\mathbb{H}),
\end{equation*}
which is a weakly holomorphic modular form of level $N$ and weight $2$ \cite[Chapter 6]{Lang}. More precisely,
it satisfies the transformation formula
\begin{equation}\label{wptransf}
\wp_{\left[\begin{smallmatrix}r\\s\end{smallmatrix}\right]}(\tau)\circ\gamma
=(c\tau+d)^2\wp_{{}^t\gamma\left[\begin{smallmatrix}r\\s\end{smallmatrix}\right]}(\tau)
\quad\textrm{for any}~\gamma
=\left[\begin{matrix}a&b\\c&d\end{matrix}\right]\in\mathrm{SL}_2(\mathbb{Z}).
\end{equation}
Hence
the following function
\begin{equation*}
(\wp_{\left[\begin{smallmatrix}a_1\\b_1\end{smallmatrix}\right]}(\tau)-
\wp_{\left[\begin{smallmatrix}c_1\\d_1\end{smallmatrix}\right]}(\tau))/
(\wp_{\left[\begin{smallmatrix}a_2\\b_2\end{smallmatrix}\right]}(\tau)-
\wp_{\left[\begin{smallmatrix}c_2\\d_2\end{smallmatrix}\right]}(\tau))
\end{equation*}
for $\left[\begin{matrix}a_k\\b_k\end{matrix}\right],
\left[\begin{matrix}c_k\\d_k\end{matrix}\right]\in(1/N)\mathbb{Z}^2-\mathbb{Z}^2$ with
$\left[\begin{matrix}a_k\\b_k\end{matrix}\right]\not\equiv
\pm\left[\begin{matrix}c_k\\d_k\end{matrix}\right]\pmod{\mathbb{Z}^2}$ ($k=1,2$)
is a modular unit of level $N$ by Lemma \ref{wp},
which is called a Weierstrass unit of level $N$.
\par
We further define three functions on $\mathbb{H}$
\begin{eqnarray*}
g_2(\tau)&=&60\sum_{\omega\in[\tau,1]-\{0\}}\omega^{-4},\\
g_3(\tau)&=&140\sum_{\omega\in[\tau,1]-\{0\}}\omega^{-6},\\
\Delta(\tau)&=&g_2(\tau)^3-27g_3(\tau)^2,
\end{eqnarray*}
which are modular forms of level $1$ and weight $4$, $6$ and $12$, respectively \cite[Chapter 3 Theorem 3]{Lang}.
\par
For a positive integer $N$, let
\begin{equation*}
\Gamma_1(N)=\{\gamma\in\mathrm{SL}_2(\mathbb{Z})~|~\gamma\equiv\left[\begin{matrix}1 & *\\0 &1
\end{matrix}\right]\pmod{N}\},
\end{equation*}
and let $X_1(N)=\overline{\Gamma}_1(N)\backslash\mathbb{H}^*$ be the corresponding modular curve
where $\overline{\Gamma}_1(N)=\Gamma_1(N)/\{\pm I_2\}$.

\begin{lemma}\label{generators}
\begin{itemize}
\item[\textup{(i)}]
If $N\geq2$, then
$\mathbb{C}(X_1(N))=\mathbb{C}(j,(g_2g_3/\Delta)\wp_{\left[\begin{smallmatrix}0\\1/N\end{smallmatrix}\right]})$.
\item[\textup{(ii)}] If $N\geq2$, then
$\mathbb{C}(X(N))=\mathbb{C}(X_1(N))((g_2g_3/\Delta)\wp_{\left[\begin{smallmatrix}1/N\\0\end{smallmatrix}\right]})$.
\item[\textup{(iii)}] $\mathbb{C}(X_1(4))=\mathbb{C}(g_{\left[\begin{smallmatrix}
1/4\\0\end{smallmatrix}\right]}(4\tau)^{-8}
g_{\left[\begin{smallmatrix}
1/2\\0\end{smallmatrix}\right]}(4\tau)^8)$.
\end{itemize}
\end{lemma}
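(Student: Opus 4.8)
The plan is to obtain (i) and (ii) from the Galois correspondence for the extension $\mathbb{C}(X(N))/\mathbb{C}(j)$, and to prove (iii) by recognising the displayed function as a Hauptmodul of the genus-zero curve $X_1(4)$. For (i) and (ii) set $F_{\left[\begin{smallmatrix}r\\s\end{smallmatrix}\right]}(\tau)=(g_2(\tau)g_3(\tau)/\Delta(\tau))\wp_{\left[\begin{smallmatrix}r\\s\end{smallmatrix}\right]}(\tau)$. Since $g_2g_3/\Delta$ has level $1$ and weight $-2$ and $\wp_{\left[\begin{smallmatrix}r\\s\end{smallmatrix}\right]}$ has weight $2$, the formula (\ref{wptransf}) gives $F_{\left[\begin{smallmatrix}r\\s\end{smallmatrix}\right]}\circ\gamma=F_{{}^t\gamma\left[\begin{smallmatrix}r\\s\end{smallmatrix}\right]}$ for all $\gamma\in\mathrm{SL}_2(\mathbb{Z})$; moreover $F_{\left[\begin{smallmatrix}r\\s\end{smallmatrix}\right]}$ is holomorphic on $\mathbb{H}$ (as $\Delta$ is nonvanishing there and $\wp_{\left[\begin{smallmatrix}r\\s\end{smallmatrix}\right]}$ is weakly holomorphic) and, for $\left[\begin{smallmatrix}r\\s\end{smallmatrix}\right]\in(1/N)\mathbb{Z}^2-\mathbb{Z}^2$, invariant under $\Gamma(N)$, hence an element of $\mathbb{C}(X(N))$. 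Since $\wp$ is even and $[\tau,1]$-periodic, $F_{\left[\begin{smallmatrix}r\\s\end{smallmatrix}\right]}$ depends only on $\left[\begin{smallmatrix}r\\s\end{smallmatrix}\right]$ modulo $\mathbb{Z}^2$, and by Lemma \ref{wp} (together with a short continuity argument) $F_{\left[\begin{smallmatrix}r\\s\end{smallmatrix}\right]}=F_{\left[\begin{smallmatrix}r'\\s'\end{smallmatrix}\right]}$ precisely when $\left[\begin{smallmatrix}r\\s\end{smallmatrix}\right]\equiv\pm\left[\begin{smallmatrix}r'\\s'\end{smallmatrix}\right]\pmod{\mathbb{Z}^2}$.

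For (i), the function $F_{\left[\begin{smallmatrix}0\\1/N\end{smallmatrix}\right]}$ is fixed by $\Gamma_1(N)$ because $^t\gamma\left[\begin{smallmatrix}0\\1/N\end{smallmatrix}\right]\equiv\left[\begin{smallmatrix}0\\1/N\end{smallmatrix}\right]\pmod{\mathbb{Z}^2}$ for $\gamma\in\Gamma_1(N)$, so $\mathbb{C}(j,F_{\left[\begin{smallmatrix}0\\1/N\end{smallmatrix}\right]})\subseteq\mathbb{C}(X_1(N))$. Conversely, $\mathbb{C}(X(N))/\mathbb{C}(j)$ is Galois with group $\overline{\Gamma}(1)/\overline{\Gamma}(N)$, hence $\mathbb{C}(j,F_{\left[\begin{smallmatrix}0\\1/N\end{smallmatrix}\right]})$ is the fixed field of the stabiliser of $F_{\left[\begin{smallmatrix}0\\1/N\end{smallmatrix}\right]}$; by the criterion above this stabiliser is the set of classes of $\gamma=\left[\begin{smallmatrix}a&b\\c&d\end{smallmatrix}\right]$ with $c\equiv0$ and $d\equiv\pm1\pmod{N}$, which is exactly $\overline{\Gamma}_1(N)/\overline{\Gamma}(N)$, so $\mathbb{C}(j,F_{\left[\begin{smallmatrix}0\\1/N\end{smallmatrix}\right]})=\mathbb{C}(X_1(N))$. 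For (ii), the $\overline{\Gamma}_1(N)/\overline{\Gamma}(N)$-conjugates of $F_{\left[\begin{smallmatrix}1/N\\0\end{smallmatrix}\right]}$ are the functions $F_{\left[\begin{smallmatrix}1/N\\b/N\end{smallmatrix}\right]}$, $b\in\mathbb{Z}/N\mathbb{Z}$, which are pairwise distinct by the same criterion; their number equals $|\overline{\Gamma}_1(N)/\overline{\Gamma}(N)|=[\mathbb{C}(X(N)):\mathbb{C}(X_1(N))]$, so $F_{\left[\begin{smallmatrix}1/N\\0\end{smallmatrix}\right]}$ is a primitive element and $\mathbb{C}(X(N))=\mathbb{C}(X_1(N))(F_{\left[\begin{smallmatrix}1/N\\0\end{smallmatrix}\right]})$.

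For (iii), put $t=g_{\left[\begin{smallmatrix}1/4\\0\end{smallmatrix}\right]}(4\tau)^{-8}g_{\left[\begin{smallmatrix}1/2\\0\end{smallmatrix}\right]}(4\tau)^{8}$. Expanding (\ref{Siegel}) and collecting the infinite products gives $g_{\left[\begin{smallmatrix}1/4\\0\end{smallmatrix}\right]}(4\tau)=-q^{-1/24}\prod_{n\geq1}(1-q^{2n-1})$ and $g_{\left[\begin{smallmatrix}1/2\\0\end{smallmatrix}\right]}(4\tau)=-q^{-1/6}\prod_{n\geq1}(1-q^{4n-2})^{2}$, hence $t=q^{-1}\prod_{n\geq1}(1-q^{4n-2})^{16}(1-q^{2n-1})^{-8}=q^{-1}+O(1)$; thus $t$ is nonconstant with a simple pole at $i\infty$, and has neither zero nor pole on $\mathbb{H}$ since Siegel functions do not vanish there. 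Using the transformation rule $g_{\left[\begin{smallmatrix}r\\s\end{smallmatrix}\right]}\circ\gamma=\varepsilon\,g_{{}^t\gamma\left[\begin{smallmatrix}r\\s\end{smallmatrix}\right]}$ ($\varepsilon$ a root of unity) together with the scaling $\tau\mapsto4\tau$ (equivalently, after rewriting $t$ as the eta quotient $\eta(2\tau)^{24}/(\eta(\tau)^{8}\eta(4\tau)^{16})$, where $\eta(\tau)=q^{1/24}\prod_{n\geq1}(1-q^{n})$, and invoking the standard criteria and cusp-order formulas for eta quotients) one verifies that $t$ is invariant under $\Gamma_1(4)$ and that its only pole on $X_1(4)$ is the simple pole at $i\infty$. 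Since $X_1(4)$ has genus zero, such a $t$ is a Hauptmodul, whence $\mathbb{C}(X_1(4))=\mathbb{C}(t)$. Parts (i) and (ii) are mostly bookkeeping with the Galois correspondence, the only delicate point being the upgrade of the pointwise Lemma \ref{wp} to the statement that $F_{\left[\begin{smallmatrix}r\\s\end{smallmatrix}\right]}$ determines $\left[\begin{smallmatrix}r\\s\end{smallmatrix}\right]$ up to sign; the main obstacle lies in (iii), namely controlling $t$ under $\Gamma_1(4)$ in the presence of the substitution $\tau\mapsto4\tau$ and computing the order of $t$ at the cusps $0$ and $1/2$, which is the most technical step and where passing to eta quotients is the efficient route.
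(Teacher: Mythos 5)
Your argument is correct, but note that the paper offers no proof of this lemma at all: parts (i)--(ii) are dispatched by citing \cite[Proposition 7.5.1]{D-S} and part (iii) by citing \cite[Table 2]{K-S}. What you have written for (i)--(ii) is essentially a reconstruction of the Diamond--Shurman proof: the normalized functions $F_{\left[\begin{smallmatrix}r\\s\end{smallmatrix}\right]}=(g_2g_3/\Delta)\wp_{\left[\begin{smallmatrix}r\\s\end{smallmatrix}\right]}$, the Galois action $F_{\left[\begin{smallmatrix}r\\s\end{smallmatrix}\right]}\circ\gamma=F_{{}^t\gamma\left[\begin{smallmatrix}r\\s\end{smallmatrix}\right]}$, and the identification of the stabilizer of $F_{\left[\begin{smallmatrix}0\\1/N\end{smallmatrix}\right]}$ with $\overline{\Gamma}_1(N)/\overline{\Gamma}(N)$ (and the counting of the $N$ distinct conjugates $F_{\left[\begin{smallmatrix}1/N\\b/N\end{smallmatrix}\right]}$ for (ii)) are exactly the ingredients there; you are right to flag the upgrade of the pointwise Lemma \ref{wp} to an identity of functions as the only point needing care. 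For (iii) you give a genuinely self-contained alternative to the citation: your product expansions are correct, your eta-quotient $t=\eta(2\tau)^{24}/(\eta(\tau)^8\eta(4\tau)^{16})$ agrees with the paper's own Remark \ref{explicit} (which gives $g_{1,4}=16+\eta(\tau)^8/\eta(4\tau)^8$ via the Jacobi identity), and the Hauptmodul conclusion follows once the deferred facts --- $\Gamma_1(4)$-invariance and the absence of poles at the cusps $0$ and $1/2$ --- are checked, which the Ligozat/Newman criterion and the standard cusp-order formula do confirm (the divisor of $t$ on $X_1(4)$ is $(1/2)-(i\infty)$, matching the values $16$, $0$, $\infty$ at $0$, $1/2$, $i\infty$ quoted from \cite{K-K} and \cite{K-S}). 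What your route buys is a proof independent of \cite[Table 2]{K-S}; what it costs is that the invariance and cusp computation, the genuinely technical core of (iii), are only sketched rather than carried out.
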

\begin{proof}
(i), (ii) \cite[Proposition 7.5.1]{D-S}.\\
(iii) \cite[Table 2]{K-S}.\\
\end{proof}

The modular curve $X_1(4)$ is of genus $0$ and has three inequivalent cusps, namely
$0$, $1/2$ and $i\infty$ \cite[p.131]{K-K}. Put
\begin{equation*}
g_{1,4}(\tau)=g_{\left[\begin{smallmatrix}
1/4\\0\end{smallmatrix}\right]}(4\tau)^{-8}
g_{\left[\begin{smallmatrix}
1/2\\0\end{smallmatrix}\right]}(4\tau)^8,
\end{equation*}
which is a primitive generator of $\mathbb{C}(X_1(4))$ over $\mathbb{C}$ by Lemma \ref{generators}(iii).
It then follows from \cite[Theorem 6.5]{K-S} that the map
\begin{eqnarray*}
X_1(4)=\overline{\Gamma}_1(4)\backslash\mathbb{H}^*&\rightarrow&\mathbb{P}^1(\mathbb{C})\\
\tau&\mapsto&[g_{1,4}(\tau):1]
\end{eqnarray*}
is an isomorphism between compact Riemann surfaces. Moreover, $g_{1,4}(\tau)$ has values $16$, $0$, $\infty$ at the
cusps $\tau=0$, $1/2$, $i\infty$,
respectively (\cite[Theorem 3(ii)]{K-K} and \cite[Table3]{K-S}). Thus we claim that
\begin{equation}\label{observation}
\textrm{$g_{1,4}-c$ for $c\in\mathbb{C}$ is a modular unit (for $\Gamma_1(4)$) $\Longleftrightarrow$ $c=16$ or $0$}.
\end{equation}

\begin{theorem}\label{closure}
Let $\mathcal{O}_{1,N}$ and $\mathcal{O}_N$ be the integral closures of
$\mathbb{C}[j]$ in $\mathbb{C}(X_1(N))$ and $\mathbb{C}(X(N))$, respectively.
Assume that $N\equiv0\pmod{4}$.
\begin{itemize}
\item[\textup{(i)}] $\mathcal{O}_{1,4}=\mathbb{C}[g_{1,4},g_{1,4}^{-1},(g_{1,4}-16)^{-1}]$.
\item[\textup{(ii)}] $\mathcal{O}_{1,N}=\mathcal{O}_{1,4}[h_{1,N}]$, where
$h_{1,N}(\tau)=(\wp_{\left[\begin{smallmatrix}0\\1/N\end{smallmatrix}\right]}(\tau)
-\wp_{\left[\begin{smallmatrix}0\\1/2\end{smallmatrix}\right]}(\tau))/
(\wp_{\left[\begin{smallmatrix}0\\1/2\end{smallmatrix}\right]}(\tau)-
\wp_{\left[\begin{smallmatrix}0\\1/4\end{smallmatrix}\right]}(\tau))$.
\item[\textup{(iii)}] $\mathcal{O}_N=\mathcal{O}_{1,N}[h_N]$, where $h_N(\tau)
=(\wp_{\left[\begin{smallmatrix}1/N\\0\end{smallmatrix}\right]}(\tau)-
\wp_{\left[\begin{smallmatrix}0\\1/2\end{smallmatrix}\right]}(\tau))/
(\wp_{\left[\begin{smallmatrix}0\\1/2\end{smallmatrix}\right]}(\tau)-
\wp_{\left[\begin{smallmatrix}0\\1/4\end{smallmatrix}\right]}(\tau))$.
\end{itemize}
\end{theorem}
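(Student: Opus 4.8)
The plan is to treat (i), (ii), (iii) in turn. I would first record the (immediate) extension of Proposition \ref{characterize} to $\mathbb{C}(X_1(N))$: since $\mathbb{C}(X_1(N))\subseteq\mathbb{C}(X(N))$, an element of $\mathbb{C}(X_1(N))$ is integral over $\mathbb{C}[j]$ iff it is weakly holomorphic, so $\mathcal{O}_{1,N}$ (resp. $\mathcal{O}_N$) is exactly the ring of modular functions in $\mathbb{C}(X_1(N))$ (resp. $\mathbb{C}(X(N))$) holomorphic on $\mathbb{H}$; equivalently it is the coordinate ring $\mathcal{O}(Y_1(N))$ (resp. $\mathcal{O}(Y(N))$) of the affine curve obtained by deleting the cusps. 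For (i): $g_{1,4}$ identifies $X_1(4)$ with $\mathbb{P}^1(\mathbb{C})$ and the three cusps $0,1/2,i\infty$ with $16,0,\infty$, so $Y_1(4)\cong\mathbb{P}^1(\mathbb{C})\setminus\{0,16,\infty\}$; an element of $\mathbb{C}(X_1(4))=\mathbb{C}(g_{1,4})$ is a rational function of $g_{1,4}$, and after putting it in lowest terms one sees it is holomorphic on $\mathbb{H}$ iff it lies in $\mathbb{C}[g_{1,4},g_{1,4}^{-1},(g_{1,4}-16)^{-1}]$. As the three listed generators are themselves holomorphic on $\mathbb{H}$ (e.g. $g_{1,4}$ takes the value $\infty$ only at $i\infty$ and the value $16$ only at $0$), this proves (i).

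For (ii) and (iii) I would run the same two-step argument; I describe (ii), the case (iii) being identical after replacing $(\mathbb{C}(X_1(4)),\mathbb{C}(X_1(N)),h_{1,N})$ by $(\mathbb{C}(X_1(N)),\mathbb{C}(X(N)),h_N)$. Step one: $h_{1,N}$ is a Weierstrass unit of level $N$ lying in $\mathbb{C}(X_1(N))$ --- for $\gamma\in\Gamma_1(4)$ and $4\mid N$ one checks ${}^t\gamma\left[\begin{smallmatrix}0\\1/2\end{smallmatrix}\right]\equiv\left[\begin{smallmatrix}0\\1/2\end{smallmatrix}\right]$ and ${}^t\gamma\left[\begin{smallmatrix}0\\1/4\end{smallmatrix}\right]\equiv\left[\begin{smallmatrix}0\\1/4\end{smallmatrix}\right]\pmod{\mathbb{Z}^2}$, so by (\ref{wptransf}) $h_{1,N}$ is even $\Gamma_1(N)$-invariant --- whence $\mathcal{O}_{1,4}[h_{1,N}]\subseteq\mathcal{O}_{1,N}$. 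Step two: solving the identity $\wp_{\left[\begin{smallmatrix}0\\1/N\end{smallmatrix}\right]}=h_{1,N}\bigl(\wp_{\left[\begin{smallmatrix}0\\1/2\end{smallmatrix}\right]}-\wp_{\left[\begin{smallmatrix}0\\1/4\end{smallmatrix}\right]}\bigr)+\wp_{\left[\begin{smallmatrix}0\\1/2\end{smallmatrix}\right]}$ and multiplying by $g_2g_3/\Delta$ shows $(g_2g_3/\Delta)\wp_{\left[\begin{smallmatrix}0\\1/N\end{smallmatrix}\right]}\in\mathbb{C}(X_1(4))(h_{1,N})$, since $(g_2g_3/\Delta)\wp_{\left[\begin{smallmatrix}0\\1/2\end{smallmatrix}\right]}$ and $(g_2g_3/\Delta)\wp_{\left[\begin{smallmatrix}0\\1/4\end{smallmatrix}\right]}$ are $\Gamma_1(4)$-invariant; with Lemma \ref{generators}(i) this gives $\mathbb{C}(X_1(N))=\mathbb{C}(X_1(4))(h_{1,N})$.

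It then remains to prove two geometric facts: (a) the covering $\pi\colon Y_1(N)\to Y_1(4)$ is unramified, and (b) $h_{1,N}$ separates the points in each fibre of $\pi$. Fact (a) holds because $\Gamma_1(4)$, hence $\Gamma_1(N)$, is torsion-free when $N\equiv0\pmod4$, so $Y_1(4),Y_1(N)$ are free quotients of $\mathbb{H}$ and $\pi$ is a topological covering. Granting (b): by (a) the different, hence the relative discriminant of $\mathcal{O}_{1,N}$ over $\mathcal{O}_{1,4}$, is trivial, while the discriminant $\delta$ of $1,h_{1,N},\dots,h_{1,N}^{d-1}$ (with $d=[\mathbb{C}(X_1(N)):\mathbb{C}(X_1(4))]$) lies in $\mathcal{O}_{1,4}$ and, by (b), has no zeros on $Y_1(4)$, and no poles there because $h_{1,N}$ is a unit on $\mathbb{H}$; thus $\delta\in\mathcal{O}_{1,4}^\times$, which forces $\mathcal{O}_{1,4}[h_{1,N}]=\mathcal{O}_{1,N}$. (Equivalently, $Q\mapsto(\pi(Q),h_{1,N}(Q))$ is then a finite, injective, unramified morphism $Y_1(N)\to Y_1(4)\times\mathbb{A}^1$, hence a closed immersion, and comparing coordinate rings gives the same conclusion.)

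Fact (b) is where the work is. For $\tau_0\in\mathbb{H}$ and $\gamma_1,\gamma_2\in\Gamma_1(4)$ with bottom rows $(c_k,d_k)$, the law (\ref{wptransf}) together with the congruences above gives $h_{1,N}\circ\gamma_k=\bigl(\wp_{{}^t\gamma_k\left[\begin{smallmatrix}0\\1/N\end{smallmatrix}\right]}-\wp_{\left[\begin{smallmatrix}0\\1/2\end{smallmatrix}\right]}\bigr)\big/\bigl(\wp_{\left[\begin{smallmatrix}0\\1/2\end{smallmatrix}\right]}-\wp_{\left[\begin{smallmatrix}0\\1/4\end{smallmatrix}\right]}\bigr)$ with ${}^t\gamma_k\left[\begin{smallmatrix}0\\1/N\end{smallmatrix}\right]=\left[\begin{smallmatrix}c_k/N\\d_k/N\end{smallmatrix}\right]$; since the common denominator is nonzero at $\tau_0$, Lemma \ref{wp} and the $\mathbb{Q}$-linear independence of $1,\tau_0$ show that $h_{1,N}(\gamma_1\tau_0)=h_{1,N}(\gamma_2\tau_0)$ iff $(c_1,d_1)\equiv\pm(c_2,d_2)\pmod N$. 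The $-$ sign is impossible, since $d_1\equiv d_2\equiv1\pmod4$ and $4\mid N$; and in the $+$ case computing the bottom row of $\gamma_2\gamma_1^{-1}$ modulo $N$ yields $(0,1)$, whence (using $\det=1$) $\gamma_2\gamma_1^{-1}\in\Gamma_1(N)$, so $\gamma_1\tau_0$ and $\gamma_2\tau_0$ give the same point of $Y_1(N)$; the finitely many elliptic $\tau_0$ require only the extra remark that a finite-order stabiliser element congruent to a unipotent matrix modulo $N$ must be the identity when $N\ge3$. For (iii) one argues the same way over $\pi\colon Y(N)\to Y_1(N)$ with $\gamma_1,\gamma_2\in\Gamma_1(N)$, comparing top rows: $h_N(\gamma_1\tau_0)=h_N(\gamma_2\tau_0)$ iff $(a_1,b_1)\equiv\pm(a_2,b_2)\pmod N$, the $-$ sign is excluded by $a_1\equiv a_2\equiv1\pmod N$ ($N\ge3$), and the $+$ case gives $\gamma_2\gamma_1^{-1}\equiv I\pmod N$.
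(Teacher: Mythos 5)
Your proof is correct, and for part (i) it is the paper's argument verbatim: identify $X_1(4)$ with $\mathbb{P}^1(\mathbb{C})$ via $g_{1,4}$, note the cusps go to $0,16,\infty$, and read off the ring of functions holomorphic away from the cusps. For (ii) and (iii) the skeleton also coincides with the paper's --- the easy inclusion via Weierstrass units, the field identity $\mathbb{C}(X_1(N))=\mathbb{C}(X_1(4))(h_{1,N})$ obtained by the same substitution into Lemma \ref{generators}, and then the key fact that the discriminant of the power basis $1,h_{1,N},\ldots,h_{1,N}^{d-1}$ is a unit of $\mathcal{O}_{1,4}$ --- but you package that key step differently. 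The paper writes out the trace-form linear system explicitly, computes $\det(T)=\prod_{m<n}(c_m-c_n)^2$ by Vandermonde, observes that each difference of conjugates is a Weierstrass unit (no zeros or poles on $\mathbb{H}$ by Lemma \ref{wp}), and finishes with Cramer's rule; you instead cite the general commutative-algebra criterion that a basis of integral elements with unit discriminant generates the integral closure, and verify unit-ness geometrically (the covering $Y_1(N)\rightarrow Y_1(4)$ is unramified since $\Gamma_1(4)$ is torsion-free, and $h_{1,N}$ separates each fibre). These are the same computation in two costumes, so nothing is gained or lost structurally; what your write-up does add is an explicit verification, via the congruence $(c_1,d_1)\equiv\pm(c_2,d_2)\pmod{N}$ with the minus sign excluded by $d\equiv1\pmod{4}$ and the plus case identified with equality of cosets, that distinct conjugates of $h_{1,N}$ really do come from $\pm$-inequivalent vectors and hence take distinct values at every point of $\mathbb{H}$ --- a point the paper compresses into a single appeal to Lemma \ref{wp}. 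Your fallback remark about elliptic points is redundant (torsion-freeness of $\Gamma_1(4)$ already disposes of them), but harmless.
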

\begin{proof}
(i) Since $g_{1,4}$ and $g_{1,4}-16$ are modular units in $\mathbb{C}(X_1(4))$ by
Lemma \ref{generators}(iii) and (\ref{observation}), we get
the inclusion $\mathcal{O}_{1,4}\supseteq\mathbb{C}[g_{1,4},g_{1,4}^{-1},(g_{1,4}-16)^{-1}]$. \par
Conversely, let $h\in\mathcal{O}_{1,4}$. Then it is a rational function of $g_{1,4}$ by Lemma
\ref{generators}(iii), namely
$h=P(g_{1,4})/Q(g_{1,4})$ for some polynomials
$P(X),Q(X)\in\mathbb{C}[X]$ which are relatively prime. If $Q(X)$ has a linear factor other than $g_{1,4}$ and $g_{1,4}-16$,
then $h$ has a pole on $\mathbb{H}$ by (\ref{observation}). Hence we obtain the reverse inclusion
$\mathcal{O}_{1,4}\subseteq\mathbb{C}[g_{1,4},g_{1,4}^{-1},(g_{1,4}-16)^{-1}]$. This proves (i).\\
(ii) Since $h_{1,N}\in\mathcal{O}_{1,N}$ by Lemma \ref{generators}(i) and the paragraph below Lemma \ref{wp},
we have the inclusion $\mathcal{O}_{1,N}\supseteq\mathcal{O}_{1,4}[h_{1,N}]$.
\par
Let $f\in\mathcal{O}_{1,N}$. Since
\begin{eqnarray*}
\mathbb{C}(X_1(N))&=&\mathbb{C}(j,(g_2g_3/\Delta)\wp_{\left[\begin{smallmatrix}0\\1/N\end{smallmatrix}\right]})
\quad\textrm{by Lemma \ref{generators}(i)}\\
&=&\mathbb{C}(X_1(4))((g_2g_3/\Delta)\wp_{\left[\begin{smallmatrix}0\\1/N\end{smallmatrix}\right]})\quad
\textrm{because}~j\in\mathbb{C}(X_1(4))\\
&=&\mathbb{C}(X_1(4))((g_2g_3/\Delta)((\wp_{\left[\begin{smallmatrix}0\\1/2\end{smallmatrix}\right]}
-
\wp_{\left[\begin{smallmatrix}0\\1/4\end{smallmatrix}\right]})h_{1,N}+\wp_{\left[\begin{smallmatrix}0\\1/2\end{smallmatrix}\right]}))\\
&=&\mathbb{C}(X_1(4))(h_{1,N})\\&&
\textrm{because}
~(g_2g_3/\Delta)\wp_{\left[\begin{smallmatrix}0\\1/2\end{smallmatrix}\right]},
(g_2g_3/\Delta)\wp_{\left[\begin{smallmatrix}0\\1/4\end{smallmatrix}\right]}\in\mathbb{C}(X_1(4))~\textrm{by Lemma \ref{generators}(i)} ,
\end{eqnarray*}
$f$ can be written in the form
\begin{equation}\label{expression}
f=r_0+r_1h+r_2h^2+\cdots+r_{d-1}h^{d-1}
\end{equation}
where $h=h_{1,N}$, $d=[\mathbb{C}(X_1(N)):\mathbb{C}(X_1(4))]$ and $r_0,r_2,\ldots, r_{d-1}\in \mathbb{C}(X_1(4))$.
Multiplying both sides of the equation (\ref{expression}) by
$1,h,\ldots, h^{d-1}$, respectively, we attain a linear system (with unknowns $r_0,r_1,\ldots,r_{d-1}$)
\begin{equation*}
\left[\begin{matrix}1 & h & \cdots & h^{d-1}\\
h & h^2 & \cdots & h^{d}\\
\vdots & \vdots & \ddots & \vdots\\
h^{d-1} & h^d & \cdots & h^{2d-2}
\end{matrix}\right]
\left[\begin{matrix}
r_0 \\ r_1 \\ \vdots \\ r_{d-1}
\end{matrix}\right]
=\left[\begin{matrix}
f \\ hf \\ \vdots \\ h^{d-1}f
\end{matrix}\right].
\end{equation*}
Taking the trace $\mathrm{Tr}$ ($=\mathrm{Tr}_{\mathbb{C}(X_1(N))/\mathbb{C}(X_1(4))}$) on both sides
we achieve
\begin{equation}\label{system}
\left[\begin{matrix}\mathrm{Tr}(1) & \mathrm{Tr}(h) & \cdots & \mathrm{Tr}(h^{d-1})\\
\mathrm{Tr}(h) & \mathrm{Tr}(h^2) & \cdots & \mathrm{Tr}(h^d)\\
\vdots & \vdots & \ddots & \vdots\\
\mathrm{Tr}(h^{d-1}) & \mathrm{Tr}(h^d) & \cdots & \mathrm{Tr}(h^{2d-2})
\end{matrix}\right]
\left[\begin{matrix}
r_0 \\ r_1 \\ \vdots \\ r_{d-1}
\end{matrix}\right]
=\left[\begin{matrix}
\mathrm{Tr}(f)\\\mathrm{Tr}(hf) \\ \vdots \\ \mathrm{Tr}(h^{d-1}f)
\end{matrix}\right].
\end{equation}
Let $T$ be the $d\times d$ matrix in the left side of (\ref{system}), and let
$c_1,c_2,\ldots,c_d$ be the conjugates of $h\in\mathbb{C}(X_1(N))$ over $\mathbb{C}(X_1(4))$.
Then we get that
\begin{eqnarray*}
\det(T)&=&\left|\begin{array}{cccc}
\sum_{k=1}^d c_k^0 & \sum_{k=1}^d c_k^1 & \cdots & \sum_{k=1}^d c_k^{d-1}\\
\sum_{k=1}^d c_k^1 & \sum_{k=1}^d c_k^2 & \cdots & \sum_{k=1}^d c_k^{d}\\
\vdots & \vdots & \ddots & \vdots\\
\sum_{k=1}^d c_k^{d-1} & \sum_{k=1}^d c_k^d & \cdots & \sum_{k=1}^d c_k^{2d-2}\\
\end{array}\right|\\
&=&\left|\begin{array}{cccc}
c_1^0 & c_2^0 & \cdots & c_d^{0}\\
c_1^1 & c_2^1 & \cdots & c_d^{1}\\
\vdots & \vdots & \ddots & \vdots\\
c_1^{d-1} & c_2^{d-1} & \cdots & c_d^{d-1}
\end{array}\right|~
\left|\begin{array}{cccc}
c_1^0 & c_1^1 & \cdots & c_1^{d-1}\\
c_2^0 & c_2^1 & \cdots & c_2^{d-1}
\\
\vdots & \vdots & \ddots & \vdots\\
c_d^0 & c_d^1 & \cdots & c_d^{d-1}
\end{array}\right|\\
&=&\prod_{1\leq m<n\leq d}(c_m-c_n)^2\quad\textrm{by the
Van der Monde determinant formula}.
\end{eqnarray*}
On the other hand, any conjugate of $h\in\mathbb{C}(X_1(N))$ over $\mathbb{C}(X_1(4))$ is of the form
\begin{equation*}
(\wp_{\left[\begin{smallmatrix}a/N\\b/N\end{smallmatrix}\right]}(\tau)-
\wp_{\left[\begin{smallmatrix}0\\1/2\end{smallmatrix}\right]}(\tau))/
(\wp_{\left[\begin{smallmatrix}0\\1/2\end{smallmatrix}\right]}(\tau)-
\wp_{\left[\begin{smallmatrix}0\\1/4\end{smallmatrix}\right]}(\tau))\quad\textrm{for some}~
\left[\begin{matrix}a\\b\end{matrix}\right]\in\mathbb{Z}^2-N\mathbb{Z}^2
\end{equation*}
owing to the fact $\mathrm{Gal}(\mathbb{C}(X_1(N))/\mathbb{C}(X_1(4)))\simeq
\overline{\Gamma}_1(N)/\overline{\Gamma}_1(4)$, the transformation formula (\ref{wptransf}) and Lemma \ref{wp}.
Moreover, we see that the function
\begin{equation*}
(\wp_{\left[\begin{smallmatrix}a/N\\b/N\end{smallmatrix}\right]}(\tau)
-\wp_{\left[\begin{smallmatrix}c/N\\d/N\end{smallmatrix}\right]}(\tau))/
(\wp_{\left[\begin{smallmatrix}0\\1/2\end{smallmatrix}\right]}(\tau)-
\wp_{\left[\begin{smallmatrix}0\\1/4\end{smallmatrix}\right]}(\tau))
\end{equation*}
for $\left[\begin{matrix}a\\b\end{matrix}\right],
\left[\begin{matrix}c\\d\end{matrix}\right]\in\mathbb{Z}^2-N\mathbb{Z}^2$ with
$\left[\begin{matrix}a\\b\end{matrix}\right]\not\equiv
\pm\left[\begin{matrix}c\\d\end{matrix}\right]\pmod{N\mathbb{Z}^2}$
has no zeros and poles on $\mathbb{H}$ by Lemma \ref{wp}. This implies that $\det(T)$ becomes a modular unit
in $\mathbb{C}(X_1(4))$, in particular,
$\det(T)$ belongs to $\mathcal{O}_{1,4}^\times$.
It then follows that $r_0,r_1,\ldots,r_{d-1}\in\mathcal{O}_{1,4}$, and hence we deduce
the inclusion $\mathcal{O}_{1,N}\subseteq\mathcal{O}_{1,4}[h_{1,N}]$.
This completes the proof of (ii).\\
(iii) In like manner as in the proof of (ii) one can prove (iii).
\end{proof}

\begin{remark}\label{explicit}
Let
\begin{eqnarray*}
\theta_2(\tau)=\sum_{n\in\mathbb{Z}}e^{\pi
i(n+1/2)^2\tau},\quad
\theta_3(\tau)=\sum_{n\in\mathbb{Z}}e^{\pi
in^2\tau}\quad\textrm{and}\quad
\theta_4(\tau)=\sum_{n\in\mathbb{Z}}(-1)^ne^{\pi in^2\tau}
\end{eqnarray*}
be the classical Jacobi theta functions, and let
\begin{equation}\label{eta}
\eta(\tau)=q^{1/24}\prod_{n=1}^{\infty}(1-q^n)
\end{equation}
be the Dedekind-eta function. Then they satisfy the relations
\begin{eqnarray}\label{Jacobiidentity}
\theta_2(\tau)^4+\theta_4(\tau)^4=\theta_3(\tau)^4,
\end{eqnarray}
and
\begin{eqnarray}\label{theta-eta}
\theta_2(2\tau)=2\eta(4\tau)^2/\eta(2\tau)\quad\textrm{and}\quad
\theta_4(2\tau)=\eta(\tau)^2/\eta(2\tau),
\end{eqnarray}
due to Jacobi
\cite[pp.27--29]{BGHZ}.
Furthermore, we have
\begin{equation*}
g_{1,4}(\tau)=16\theta_3(2\tau)^4/\theta_2(2\tau)^4
\end{equation*}
as a modular unit with
$\mathrm{ord}_q(g_{1,4}\circ\left[\begin{smallmatrix}0&-1\\1&0\end{smallmatrix}\right])=0$
\cite[Table3 and Theorem 6.2]{K-S}.
Hence we derive that
\begin{eqnarray*}
g_{1,4}(\tau)-16&=&16\theta_3(2\tau)^4/\theta_2(2\tau)^4-16\\
&=&16\theta_4(2\tau)^4/\theta_2(2\tau)^4\quad\textrm{by (\ref{Jacobiidentity})}\\
&=&\eta(\tau)^8/\eta(4\tau)^8\quad\textrm{by (\ref{theta-eta})}\\
&=&q^{-1}\prod_{n=1}^{\infty}(1+q^n)^{-8}(1+q^{2n})^{-8}\quad\textrm{by the definition (\ref{eta})}.
\end{eqnarray*}
Therefore, $g_{1,4}-16$ is indeed a modular unit.
\end{remark}

\begin{corollary}\label{sum}
Every weakly holomorphic modular function can be expressed as a sum of modular units \textup{(}of higher level\textup{)}.
\end{corollary}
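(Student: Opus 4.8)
The plan is to reduce the statement to Theorem \ref{closure} after enlarging the level to a multiple of $4$, and then to observe that a $\mathbb{C}$-linear combination of products of modular units is a sum of modular units.

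First I would take a weakly holomorphic modular function $f$, say of level $N$, and put $N'=\mathrm{lcm}(N,4)$, so that $N'\equiv0\pmod{4}$ and $\Gamma(N')\subseteq\Gamma(N)$. Then $f$ is a modular function of level $N'$ which is still holomorphic on $\mathbb{H}$, so by Proposition \ref{characterize} it is integral over $\mathbb{C}[j]$; that is, $f\in\mathcal{O}_{N'}$. By Theorem \ref{closure}(i)--(iii) one has
\[
\mathcal{O}_{N'}=\mathcal{O}_{1,N'}[h_{N'}]=\mathcal{O}_{1,4}[h_{1,N'},h_{N'}]
=\mathbb{C}\bigl[g_{1,4},\,g_{1,4}^{-1},\,(g_{1,4}-16)^{-1},\,h_{1,N'},\,h_{N'}\bigr],
\]
so $f$ can be written as a $\mathbb{C}$-linear combination of monomials in the five functions $g_{1,4}$, $g_{1,4}^{-1}$, $(g_{1,4}-16)^{-1}$, $h_{1,N'}$, $h_{N'}$.

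Next I would check that each of these five functions is a modular unit of level $N'$, i.e.\ lies in $V_{N'}$: the functions $g_{1,4}$ and $g_{1,4}-16$ are modular units by Lemma \ref{generators}(iii) and (\ref{observation}) (one may also read off their $q$-expansions from Remark \ref{explicit}), and $h_{1,N'}$, $h_{N'}$ are Weierstrass units, so by Lemma \ref{wp} all five functions have neither zero nor pole on $\mathbb{H}$; since moreover each of them lies in $\mathbb{C}(X(N'))$, each belongs to $V_{N'}$. As $V_{N'}$ is a group containing $\mathbb{C}^\times$, any nonzero constant times a product of these functions again lies in $V_{N'}$. Hence every term in the expansion of $f$ obtained above, with its (nonzero) coefficient absorbed, is a single modular unit of level $N'$; after discarding the vanishing coefficients we conclude that $f$ is a finite sum of modular units of level $N'$, a multiple of $N$ divisible by $4$, which is the assertion.

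I do not expect a real obstacle once Theorem \ref{closure} is in hand; the only points deserving a word of care are the passage to the level $N'=\mathrm{lcm}(N,4)$ --- which is precisely why the corollary asserts only units of \emph{higher} level --- and the elementary remark that the modular units of a fixed level are closed under multiplication and under scaling by $\mathbb{C}^\times$. The degenerate cases cause no trouble, since a constant is itself a modular unit and $0=g_{1,4}+(-g_{1,4})$ is a sum of two modular units.
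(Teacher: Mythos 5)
Your proof is correct and follows exactly the paper's route: pass to level $N'=\mathrm{lcm}(N,4)=4N/\gcd(4,N)$, use Proposition \ref{characterize} to place $f$ in $\mathcal{O}_{N'}$, and invoke Theorem \ref{closure} to express $f$ as a polynomial in modular units, hence a sum of modular units. The paper's own proof is a three-line version of this; the points you spell out (that the generators are units of level $N'$, and that monomials in them scaled by nonzero constants remain units) are left implicit there but are exactly the intended justification.
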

\begin{proof}
Let $h$ be a weakly holomorphic modular function of level $N$.
Since it belongs to $\mathcal{O}_{4N/\gcd(4,N)}$
by Proposition \ref{characterize}, $h$ can be
written as a sum of modular units of level $4N/\gcd(4,N)$ by Theorem \ref{closure}. This completes the proof.
\end{proof}

Let $k$ and $N$ ($\geq1$) be integers.  We denote the vector space of all weakly holomorphic modular forms of level $N$ and weight $k$
by $\mathcal{M}_k^{!}(\Gamma(N))$. Then we have a graded algebra
\begin{equation*}
\mathcal{M}^!(\Gamma(N))=\bigoplus_{k\in\mathbb{Z}}\mathcal{M}_k^{!}(\Gamma(N))
\end{equation*}
with respect to weight $k$.
\par
Now, define a Klein form
\begin{equation*}
\mathfrak{k}_{\left[\begin{smallmatrix}0\\1/2\end{smallmatrix}\right]}(\tau)=(1/2\pi i)
g_{\left[\begin{smallmatrix}0\\1/2\end{smallmatrix}\right]}(\tau)/\eta(\tau)^2
\end{equation*}
which belongs to $\mathcal{M}_{-1}^!(\Gamma(8))$ \cite[Chapter 3 Theorem 4.1]{K-L}. It has no zeros and poles on $\mathbb{H}$
by the expansion formulas (\ref{Siegel}) and (\ref{eta}).

\begin{theorem}
For $N\equiv0\pmod{8}$, we get
\begin{equation*}
\mathcal{M}^!(\Gamma(N))=\mathcal{O}_N[\mathfrak{k}_{\left[\begin{smallmatrix}0\\1/2\end{smallmatrix}\right]},
\mathfrak{k}_{\left[\begin{smallmatrix}0\\1/2\end{smallmatrix}\right]}^{-1}]=
\mathbb{C}[g_{1,4},g_{1,4}^{-1},(g_{1,4}-16)^{-1},h_{1,N},h_N,
\mathfrak{k}_{\left[\begin{smallmatrix}0\\1/2\end{smallmatrix}\right]},
\mathfrak{k}_{\left[\begin{smallmatrix}0\\1/2\end{smallmatrix}\right]}^{-1}]
\end{equation*}
where $g_{1,4}$, $h_{1,N}$ and $h_N$ are functions described in \textup{Theorem \ref{closure}}.
\end{theorem}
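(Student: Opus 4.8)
The plan is to use the grading of $\mathcal{M}^{!}(\Gamma(N))$ together with the Klein form $\mathfrak{k}_{\left[\begin{smallmatrix}0\\1/2\end{smallmatrix}\right]}$ to strip the weight off an arbitrary weakly holomorphic modular form, after which Proposition \ref{characterize} and Theorem \ref{closure} do the rest. First I would settle the second equality: since $N\equiv0\pmod{4}$, Theorem \ref{closure} gives $\mathcal{O}_N=\mathcal{O}_{1,N}[h_N]=\mathcal{O}_{1,4}[h_{1,N},h_N]=\mathbb{C}[g_{1,4},g_{1,4}^{-1},(g_{1,4}-16)^{-1},h_{1,N},h_N]$, and adjoining $\mathfrak{k}_{\left[\begin{smallmatrix}0\\1/2\end{smallmatrix}\right]}$ and its inverse to both sides yields the stated identity of $\mathbb{C}$-algebras at once. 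So the entire content lies in the first equality $\mathcal{M}^{!}(\Gamma(N))=\mathcal{O}_N[\mathfrak{k}_{\left[\begin{smallmatrix}0\\1/2\end{smallmatrix}\right]},\mathfrak{k}_{\left[\begin{smallmatrix}0\\1/2\end{smallmatrix}\right]}^{-1}]$.

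For the inclusion of $\mathcal{O}_N[\mathfrak{k}_{\left[\begin{smallmatrix}0\\1/2\end{smallmatrix}\right]},\mathfrak{k}_{\left[\begin{smallmatrix}0\\1/2\end{smallmatrix}\right]}^{-1}]$ in $\mathcal{M}^{!}(\Gamma(N))$, I would first note that $\mathcal{O}_N=\mathcal{M}_0^{!}(\Gamma(N))$: by Proposition \ref{characterize} the elements of $\mathcal{O}_N$ are precisely the functions in $\mathbb{C}(X(N))$ holomorphic on $\mathbb{H}$, which is exactly the definition of a weakly holomorphic modular function of weight $0$ and level $N$. Next, $8\mid N$ gives $\Gamma(N)\subseteq\Gamma(8)$, so $\mathfrak{k}_{\left[\begin{smallmatrix}0\\1/2\end{smallmatrix}\right]}\in\mathcal{M}_{-1}^{!}(\Gamma(8))\subseteq\mathcal{M}_{-1}^{!}(\Gamma(N))$; and since $\mathfrak{k}_{\left[\begin{smallmatrix}0\\1/2\end{smallmatrix}\right]}$ has no zero on $\mathbb{H}$, its reciprocal is holomorphic on $\mathbb{H}$ and has a finite-order $q$-expansion at each cusp, hence $\mathfrak{k}_{\left[\begin{smallmatrix}0\\1/2\end{smallmatrix}\right]}^{-1}\in\mathcal{M}_1^{!}(\Gamma(N))$. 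As $\mathcal{M}^{!}(\Gamma(N))$ is a ring containing $\mathcal{O}_N$, $\mathfrak{k}_{\left[\begin{smallmatrix}0\\1/2\end{smallmatrix}\right]}$ and $\mathfrak{k}_{\left[\begin{smallmatrix}0\\1/2\end{smallmatrix}\right]}^{-1}$, this inclusion follows.

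For the reverse inclusion I would use that every element of $\mathcal{M}^{!}(\Gamma(N))$ is a finite sum of homogeneous pieces, so it suffices to treat $f\in\mathcal{M}_k^{!}(\Gamma(N))$ for a single $k\in\mathbb{Z}$. Put $F=f\cdot\mathfrak{k}_{\left[\begin{smallmatrix}0\\1/2\end{smallmatrix}\right]}^{\,k}$. The weights cancel, $k+k(-1)=0$, so $F$ is $\Gamma(N)$-invariant; it is holomorphic on $\mathbb{H}$ because $f$ and $\mathfrak{k}_{\left[\begin{smallmatrix}0\\1/2\end{smallmatrix}\right]}^{\pm1}$ all are; and it is meromorphic at the cusps, being a finite product of Laurent $q$-expansions. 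Hence $F\in\mathcal{M}_0^{!}(\Gamma(N))=\mathcal{O}_N$, and therefore $f=F\cdot\mathfrak{k}_{\left[\begin{smallmatrix}0\\1/2\end{smallmatrix}\right]}^{-k}$ lies in $\mathcal{O}_N[\mathfrak{k}_{\left[\begin{smallmatrix}0\\1/2\end{smallmatrix}\right]},\mathfrak{k}_{\left[\begin{smallmatrix}0\\1/2\end{smallmatrix}\right]}^{-1}]$. Combining the two inclusions with the second equality completes the argument.

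I do not expect a genuine obstacle here; the one point that deserves a second look is that $\mathfrak{k}_{\left[\begin{smallmatrix}0\\1/2\end{smallmatrix}\right]}$ transforms as an honest weight $-1$ form, with trivial multiplier, under all of $\Gamma(N)$, so that the automorphy factors in $F=f\cdot\mathfrak{k}_{\left[\begin{smallmatrix}0\\1/2\end{smallmatrix}\right]}^{\,k}$ truly cancel — and this is precisely what the cited membership $\mathfrak{k}_{\left[\begin{smallmatrix}0\\1/2\end{smallmatrix}\right]}\in\mathcal{M}_{-1}^{!}(\Gamma(8))$ guarantees once $8\mid N$. Everything else reduces, through Proposition \ref{characterize}, to the structure of $\mathcal{O}_N$ already established in Theorem \ref{closure}.
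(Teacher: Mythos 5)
Your proposal is correct and follows essentially the same route as the paper: both identify $\mathcal{M}_0^{!}(\Gamma(N))$ with $\mathcal{O}_N$ and use multiplication by the appropriate power of the nowhere-vanishing weight $-1$ form $\mathfrak{k}_{\left[\begin{smallmatrix}0\\1/2\end{smallmatrix}\right]}$ to identify $\mathcal{M}_k^{!}(\Gamma(N))$ with $\mathfrak{k}_{\left[\begin{smallmatrix}0\\1/2\end{smallmatrix}\right]}^{-k}\mathcal{O}_N$, then sum over $k$ and invoke Theorem \ref{closure} for the explicit generators. Your write-up merely supplies a little more detail (e.g.\ justifying $\mathcal{M}_0^{!}(\Gamma(N))=\mathcal{O}_N$ via Proposition \ref{characterize}, which the paper calls obvious) but is the same argument.
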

\begin{proof}
It is obvious that $\mathcal{M}_0^!(\Gamma(N))=\mathcal{O}_N$.
\par
If $k\neq0$, then the following linear map
\begin{eqnarray*}
\varphi~:~\mathcal{O}_N&\rightarrow&\mathcal{M}_k^!(\Gamma(N))\\
h&\mapsto&\mathfrak{k}_{\left[\begin{smallmatrix}0\\1/2\end{smallmatrix}\right]}^{-k}h
\end{eqnarray*}
is an isomorphism, because
$\mathfrak{k}_{\left[\begin{smallmatrix}0\\1/2\end{smallmatrix}\right]}^{-1}\in\mathcal{M}_{1}^!(\Gamma(8))$
and $\mathfrak{k}_{\left[\begin{smallmatrix}0\\1/2\end{smallmatrix}\right]}\in\mathcal{M}_{-1}^!(\Gamma(8))$.
Thus $\mathcal{M}_k^!(\Gamma(N))=\mathfrak{k}_{\left[\begin{smallmatrix}0\\1/2\end{smallmatrix}\right]}^{-k}\mathcal{O}_N$
as an $\mathcal{O}_N$-module. Therefore we attain from Theorem \ref{closure}
\begin{eqnarray*}
\mathcal{M}^!(\Gamma(N))&=&\bigoplus_{k\in\mathbb{Z}}
\mathfrak{k}_{\left[\begin{smallmatrix}0\\1/2\end{smallmatrix}\right]}^{-k}\mathcal{O}_N\\
&=&\mathcal{O}_N[\mathfrak{k}_{\left[\begin{smallmatrix}0\\1/2\end{smallmatrix}\right]},
\mathfrak{k}_{\left[\begin{smallmatrix}0\\1/2\end{smallmatrix}\right]}^{-1}]\\
&=&\mathbb{C}[g_{1,4},g_{1,4}^{-1},(g_{1,4}-16)^{-1},h_{1,N},h_N,
\mathfrak{k}_{\left[\begin{smallmatrix}0\\1/2\end{smallmatrix}\right]},
\mathfrak{k}_{\left[\begin{smallmatrix}0\\1/2\end{smallmatrix}\right]}^{-1}].
\end{eqnarray*}

\end{proof}

\section {Siegel modular functions}

In this section
we shall show that if $f(Z)$ is a Siegel modular function of degree $g$ ($\geq2$)
that has no zeros and poles on $\mathbb{H}_g^\mathrm{diag}$, then $f(Z)$
is a product of $g$ modular units.

\begin{lemma}\label{each}
Let $g$ and $N$ be two positive integers $\geq2$. If $f(Z)$ is a Siegel modular function of degree $g$ and level $N$, then
the function
\begin{equation*}
f(\mathrm{diag}(\tau_1,\tau_2,\ldots,\tau_g))\quad(\mathrm{diag}(\tau_1,\tau_2,\ldots,\tau_g)\in\mathbb{H}_g^\mathrm{diag}),
\end{equation*}
as a function of $\tau_k$ \textup{(}$k=1,2,\ldots,g$\textup{)},
is a meromorphic modular function of level $N$.
\end{lemma}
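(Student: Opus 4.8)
The plan is to fix an index $k$, say $k=g$ for notational convenience, and show that $\phi(\tau) := f(\mathrm{diag}(\tau_1,\ldots,\tau_{g-1},\tau))$, with the other variables held fixed in $\mathbb{H}$, transforms correctly under $\Gamma(N) \subseteq \mathrm{SL}_2(\mathbb{Z})$ and is meromorphic on $\mathbb{H}^*$. The key observation is that $\mathrm{SL}_2(\mathbb{Z})$ embeds into $\mathrm{Sp}_g(\mathbb{Z})$ by placing a matrix $\left[\begin{smallmatrix} a & b \\ c & d \end{smallmatrix}\right]$ into the $k$-th ``slot'': the block matrix $\left[\begin{smallmatrix} A & B \\ C & D \end{smallmatrix}\right]$ where $A = \mathrm{diag}(1,\ldots,1,a)$, $D = \mathrm{diag}(1,\ldots,1,d)$, and $B$, $C$ are the diagonal matrices with $b$, $c$ respectively in the $k$-th entry and zeros elsewhere. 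One checks directly that this matrix satisfies ${}^t\gamma J \gamma = J$, so it lies in $\mathrm{Sp}_g(\mathbb{Z})$, and that it is $\equiv I_{2g} \pmod N$ precisely when $\left[\begin{smallmatrix} a & b \\ c & d \end{smallmatrix}\right] \equiv I_2 \pmod N$; hence it lies in $\Gamma(N)$ whenever the original matrix is in $\Gamma(N) \subseteq \mathrm{SL}_2(\mathbb{Z})$.

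Next I would compute the action of such an embedded matrix on a point $\mathrm{diag}(\tau_1,\ldots,\tau_g) \in \mathbb{H}_g^{\mathrm{diag}}$. Because all the matrices $A, B, C, D$ are diagonal, the formula $(AZ+B)(CZ+D)^{-1}$ applied to a diagonal $Z$ again yields a diagonal matrix, and entry-by-entry it fixes $\tau_1,\ldots,\tau_{g-1}$ and sends $\tau_g$ to $(a\tau_g+b)(c\tau_g+d)^{-1}$. Thus the embedded $\Gamma(N)$-action on $\mathbb{H}_g^{\mathrm{diag}}$ restricts exactly to the ordinary action of $\Gamma(N) \leq \mathrm{SL}_2(\mathbb{Z})$ on the $k$-th coordinate. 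Since $f$ is $\Gamma(N)$-invariant on $\mathbb{H}_g$ and $\mathbb{H}_g^{\mathrm{diag}}$ is stable under the embedded subgroup, we conclude $\phi(\gamma \tau) = \phi(\tau)$ for all $\gamma \in \Gamma(N) \leq \mathrm{SL}_2(\mathbb{Z})$, i.e.\ $\phi$ is invariant under the congruence subgroup of level $N$ in one variable.

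It remains to verify that $\phi$ is a \emph{meromorphic} modular function, which for level-$N$ functions means meromorphy on $\mathbb{H}$ together with meromorphy at the cusps. Meromorphy on $\mathbb{H}$ follows because $f$ is a quotient of Siegel modular forms of degree $g$ (holomorphic on $\mathbb{H}_g$), so $f(\mathrm{diag}(\tau_1,\ldots,\tau_g))$ is a ratio of holomorphic functions on $\mathbb{H}_g^{\mathrm{diag}} \cong \mathbb{H}^g$, hence meromorphic in each variable separately; restricting to a line where the other coordinates are fixed keeps meromorphy in $\tau_k$ (one should note the denominator is not identically zero on the relevant slice, or allow the trivial case where $\phi \equiv 0$ or $\phi \equiv \infty$, which is still counted as a meromorphic modular function). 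For the cusp condition, the Siegel modular form in numerator and denominator each has a Fourier--Jacobi (or Siegel $q$-) expansion with exponents in $(1/N)\mathbb{Z}$; specializing the off-diagonal entries of $Z$ to zero and the diagonal entries other than $\tau_k$ to fixed points of $\mathbb{H}$ produces an ordinary $q$-expansion in $q^{1/N} = e^{2\pi i \tau_k/N}$ for the numerator and denominator, with only finitely many negative-exponent terms since the original expansions are supported on a half-lattice of semi-positive definite matrices (Siegel's $\Phi$-operator / Koecher principle controls the expansion). The same applies after conjugating by any $\gamma \in \mathrm{SL}_2(\mathbb{Z})$, using the transformation law for $f$ under the larger group $\mathrm{Sp}_g(\mathbb{Z})$ pulled back along the embedding. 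I expect the main obstacle to be precisely this cusp analysis: making rigorous that specializing a degree-$g$ Siegel expansion to the diagonal slice yields a genuine (finite-tailed) $q$-expansion in the single remaining variable, and that this persists under the $\mathrm{SL}_2(\mathbb{Z})$-translates needed to check all cusps of $X(N)$.
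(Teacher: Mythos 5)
Your proof is correct and follows essentially the same route as the paper: embed $\mathrm{SL}_2(\mathbb{Z})$ diagonally into the $k$-th slot of $\mathrm{Sp}_g(\mathbb{Z})$ (the paper embeds a full product $\mathrm{SL}_2(\mathbb{Z})^g$ and then specializes $\gamma_n=I_2$ for $n\neq k$, which is the same thing), check the symplectic and congruence conditions, and compute that the action on $\mathbb{H}_g^{\mathrm{diag}}$ restricts to the usual fractional linear action on $\tau_k$. The only difference is that you additionally sketch the verification of meromorphy on $\mathbb{H}$ and at the cusps via the restricted Fourier--Jacobi expansion, a point the paper's proof leaves entirely implicit.
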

\begin{proof}
Let
\begin{equation*}
\gamma_k=\left[\begin{matrix}a_k&b_k\\c_k&d_k\end{matrix}\right]\in\mathrm{SL}_2(\mathbb{Z})\quad(k=1,2,\ldots,g),
\end{equation*}
and set
\begin{equation*}
\gamma=\left[\begin{matrix}A&B\\C&D\end{matrix}\right]=
\left[\begin{matrix}\mathrm{diag}(a_1,a_2,\ldots,a_g) & \mathrm{diag}(b_1,b_2,\ldots,b_g)\\
\mathrm{diag}(c_1,c_2,\ldots,c_g) & \mathrm{diag}(d_1,d_2,\ldots,d_g)\end{matrix}\right],
\end{equation*}
where $A,B,C,D$ are $g\times g$ block matrices. Then we derive that
\begin{eqnarray*}
{}^t\gamma J\gamma&=&\left[\begin{matrix}A&C\\B&D\end{matrix}\right]
\left[\begin{matrix}0&-I_g\\I_g&0\end{matrix}\right]
\left[\begin{matrix}A&B\\C&D\end{matrix}\right]\quad\textrm{because $A,B,C,D$ are diagonal}\\
&=&\left[\begin{matrix}CA-AC&CB-AD\\DA-BC&DB-BD\end{matrix}\right]\\
&=&\left[\begin{matrix}0&\mathrm{diag}(c_1b_1-a_1d_1,\ldots,c_gb_g-a_gd_g)\\
\mathrm{diag}(d_1a_1-b_1c_1,\ldots,d_ga_g-b_gc_g)&0\end{matrix}\right]\\
&=&J\quad\textrm{due to}~\det(\gamma_k)=a_kd_k-b_kc_k=1~(k=1,2,\ldots,g),
\end{eqnarray*}
from which we see that $\gamma$ belongs to the group $\mathrm{Sp}_g(\mathbb{Z})$.
\par
And,
for $Z=\mathrm{diag}(\tau_1,\tau_2,\ldots,\tau_g)\in\mathbb{H}_g^\mathrm{diag}$
we achieve that
\begin{eqnarray}
\gamma(Z)&=&(AZ+B)
(CZ+D)^{-1}\nonumber\\
&=&\mathrm{diag}(a_1\tau_1+b_1,\ldots,a_g\tau_g+b_g)
\mathrm{diag}(c_1\tau_1+d_1,\ldots,c_g\tau_g+d_g)^{-1}\nonumber\\
&=&\mathrm{diag}((a_1\tau_1+b_1)(c_1\tau_1+d_1)^{-1},\ldots,(a_g\tau_g+b_g)
(c_g\tau_g+d_g)^{-1})\nonumber\\
&=&\mathrm{diag}(\gamma_1(\tau_1),\gamma_2(\tau_2),\ldots,\gamma_g(\tau_g)).\label{composition}
\end{eqnarray}
\par
On the other hand, assume that $\gamma_k\equiv I_2\pmod{N}$ for all $k=1,2,\ldots,g$.
Then $\gamma\equiv I_{2g}\pmod{N}$,
and for $Z=\mathrm{diag}(\tau_1,\tau_2,\ldots,\tau_g)\in\mathbb{H}_g^\mathrm{diag}$ we have
\begin{eqnarray*}
f(\mathrm{diag}(\tau_1,\tau_2,\ldots,\tau_g))&=&
f(Z)\\
&=&f(\gamma(Z))\quad\textrm{since $f$ is of level $N$}\\
&=&f(\mathrm{diag}(\gamma_1(\tau_1),\gamma_2(\tau_2),\ldots,\gamma_g(\tau_g)))\quad\textrm{by (\ref{composition})}.
\end{eqnarray*}
In particular, when $k$ is fixed ($k=1,2,\ldots,g$) and $\gamma_n=I_2$ for all $n\neq k$, we conclude
that $f(Z)$, as a function of $\tau_k$, is a meromorphic modular function of level $N$.
\end{proof}

\begin{theorem}\label{prod2}
Let $g$ and $N$ be two positive integers $\geq2$, and let
 $f(Z)$ be a Siegel modular function of degree $g$ and level $N$.
Then, $f(Z)$ has no zeros and poles on $\mathbb{H}_g^\mathrm{diag}$ if and only if there exist modular units
$v_1(\tau),v_2(\tau),\ldots,v_g(\tau)\in V_N$ such that
\begin{equation*}
f(\mathrm{diag}(\tau_1,\tau_2,\ldots,\tau_g))=
\prod_{k=1}^g v_k(\tau_k).
\end{equation*}
\end{theorem}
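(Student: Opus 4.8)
The plan is to deduce the nontrivial (``only if'') implication from Lemma~\ref{each} together with the fact, recorded in Proposition~\ref{rank}, that $V_N/\mathbb{C}^\times$ is a \emph{free abelian group of finite rank}, and to obtain the product decomposition by an extraction argument iterated $g$ times. The ``if'' direction is immediate: if $f(\mathrm{diag}(\tau_1,\ldots,\tau_g))=\prod_{k=1}^g v_k(\tau_k)$ with each $v_k\in V_N$, then, a modular unit having no zeros and poles on $\mathbb{H}$, the right-hand side has none on $\mathbb{H}_g^\mathrm{diag}\cong\mathbb{H}^g$. For the converse, suppose $f(Z)$ has no zeros and poles on $\mathbb{H}_g^\mathrm{diag}$ and put $F(\tau_1,\ldots,\tau_g)=f(\mathrm{diag}(\tau_1,\ldots,\tau_g))$, a holomorphic and nowhere-vanishing function on $\mathbb{H}^g$. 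By Lemma~\ref{each}, for every $k$ and every fixed choice of the remaining variables, $\tau_k\mapsto F$ is a meromorphic modular function of level $N$; having neither zero nor pole on $\mathbb{H}$, it is therefore an element of $V_N$ (modular units being exactly the members of $\mathbb{C}(X(N))$ without zeros and poles on $\mathbb{H}$). It thus suffices to prove, by induction on $g$, the self-contained statement: \emph{if $F\colon\mathbb{H}^g\to\mathbb{C}^\times$ is holomorphic and, in each variable separately with the others held fixed, lies in $V_N$, then $F=\prod_{k=1}^g v_k(\tau_k)$ for some $v_1,\ldots,v_g\in V_N$}. The case $g=1$ is trivial.

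For the inductive step I would extract the $\tau_1$-dependence. Fix, by Proposition~\ref{rank}, a $\mathbb{Z}$-basis $u_1,\ldots,u_{n-1}$ of $V_N/\mathbb{C}^\times$. For each fixed $(\tau_2,\ldots,\tau_g)$ the modular unit $F(\,\cdot\,,\tau_2,\ldots,\tau_g)$ can be written as $c(\tau_2,\ldots,\tau_g)\prod_{i=1}^{n-1}u_i^{\,e_i(\tau_2,\ldots,\tau_g)}$ with $c\in\mathbb{C}^\times$ and uniquely determined integers $e_i$, and taking the logarithmic derivative in $\tau_1$ yields
\[
\frac{\partial F/\partial\tau_1}{F}(\tau_1,\ldots,\tau_g)\;=\;\sum_{i=1}^{n-1}e_i(\tau_2,\ldots,\tau_g)\,\frac{u_i'(\tau_1)}{u_i(\tau_1)}.
\]
The functions $u_1'/u_1,\ldots,u_{n-1}'/u_{n-1}$ are holomorphic on $\mathbb{H}$ (the $u_i$ having no zeros or poles there) and $\mathbb{C}$-linearly independent: a nontrivial relation $\sum_i\lambda_i u_i'/u_i\equiv0$ would make the function $\prod_i u_i^{\lambda_i}$ (single-valued, as $\mathbb{H}$ is simply connected) constant on $\mathbb{H}$, from which one reads off $\sum_i\lambda_i\,\mathrm{div}(u_i)=0$ by looking at the cusps, hence $\lambda_i=0$ since $\mathrm{div}$ is injective on $V_N/\mathbb{C}^\times$ and $u_1,\ldots,u_{n-1}$ is a basis. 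Choosing $n-1$ points of $\mathbb{H}$ at which the matrix $\big(u_i'/u_i\big)$ is invertible and solving the displayed identity for the $e_i$ exhibits each $e_i(\tau_2,\ldots,\tau_g)$ as a holomorphic function on $\mathbb{H}^{g-1}$, the left-hand side being holomorphic because $F$ is holomorphic and nowhere zero. Being integer-valued and holomorphic on the connected set $\mathbb{H}^{g-1}$, each $e_i$ is a constant, so $v_1:=\prod_i u_i^{\,e_i}\in V_N$ and $F/v_1(\tau_1)$ is independent of $\tau_1$; write $F=v_1(\tau_1)\,G(\tau_2,\ldots,\tau_g)$ with $G$ holomorphic and nowhere vanishing on $\mathbb{H}^{g-1}$.

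Finally I would check that $G$ again meets the hypotheses of the claim in $g-1$ variables: fixing $\tau_1=\tau_1^0$ gives $G=F(\tau_1^0,\,\cdot\,)/v_1(\tau_1^0)$, which in each variable $\tau_k$ ($k\ge2$) is a modular unit of level $N$ times a nonzero constant, hence again in $V_N$. By the induction hypothesis $G=\prod_{k=2}^g v_k(\tau_k)$ with $v_k\in V_N$, whence $F=\prod_{k=1}^g v_k(\tau_k)$, finishing the induction and the theorem. The main obstacle is the middle step: a priori the exponents $e_i$ in the modular-unit factorization could vary with the remaining variables, and the crucial point is that the discreteness of $V_N/\mathbb{C}^\times$, combined with the logarithmic-derivative identity and the linear independence of the $u_i'/u_i$, pins them down to be constant. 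Everything else is either elementary or a direct appeal to Lemma~\ref{each} and Proposition~\ref{rank}.
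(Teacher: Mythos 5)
Your proof is correct, but the mechanism you use to pin down the exponents is genuinely different from the paper's. Both arguments start the same way: using Lemma~\ref{each} and a $\mathbb{Z}$-basis of $V_N/\mathbb{C}^\times$ from Proposition~\ref{rank}, one writes the restriction, in one distinguished variable, as $c(\tau')\prod_i u_i^{e_i(\tau')}$, and the whole difficulty is to show the integer exponents cannot vary with the remaining variables $\tau'$. The paper first eliminates the unknown factor $c$ by dividing $f^d$ by the product of the $\overline{\Gamma}(1)/\overline{\Gamma}(N)$-translates in the distinguished variable (the norm of a modular unit down to level $1$ is a constant by Remark~\ref{V_1}); the resulting function is, in each remaining variable, a modular unit whose image is countable because the exponents are integer-valued, hence constant by Lemma~\ref{constsurj}, and constancy of the exponents follows since the $u_i$ form a basis. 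You instead differentiate: from $\partial_{\tau_1}F/F=\sum_i e_i(\tau')\,u_i'(\tau_1)/u_i(\tau_1)$ you prove the $u_i'/u_i$ are $\mathbb{C}$-linearly independent by integrating a putative relation and comparing leading exponents at the cusps --- which reduces to the injectivity of the divisor map already underlying Proposition~\ref{rank} --- and then solve a nonsingular linear system to exhibit each $e_i$ as a holomorphic, integer-valued, hence constant function on the connected set $\mathbb{H}^{g-1}$. Your route bypasses both the norm trick and Lemma~\ref{constsurj} at the modest cost of the linear-independence lemma for logarithmic derivatives (and of checking that suitable evaluation points exist, which is the standard consequence of linear independence); the paper's route stays entirely within the function-field formalism and never differentiates. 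Both proofs conclude with the same $(g-1)$-fold iteration, peeling off one variable at a time.
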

\begin{proof}
The proof of ``if" part is clear.\\
Conversely, assume that $f(Z)$ has no zeros and poles on $\mathbb{H}_g^\mathrm{diag}$.
Let $n$ ($\geq2$) be the number of inequivalent cusps
of $X(N)$. Since $V_N/\mathbb{C}^\times$ is a free abelian group of rank $n-1$ by Proposition \ref{rank},
there exist $g_1(\tau),g_2(\tau),\ldots,g_{n-1}(\tau)\in V_N$ such that
$V_N=\langle\mathbb{C}^\times,g_1,g_2,\ldots,g_{n-1}\rangle$.
Thus $f(\mathrm{diag}(\tau_1,\tau_2,\ldots,\tau_g))$, as a function of $\tau_g$, can be written as
\begin{equation}\label{prodf}
f(\mathrm{diag}(\tau_1,\tau_2,\ldots,\tau_g))=c(\tau_1,\tau_2,\ldots,\tau_{g-1})
\prod_{t=1}^{n-1}g_t(\tau_g)^{m_t(\tau_1,\tau_2,\ldots,\tau_{g-1})}
\end{equation}
where $c:\mathbb{H}^{g-1}\rightarrow\mathbb{C}^\times$ and $m_t:\mathbb{H}^{g-1}\rightarrow\mathbb{Z}$ are functions of $\tau_1,\tau_2,
\ldots,\tau_{g-1}$ by Lemma \ref{each} and the assumption.
\par
Then we deduce that
\begin{eqnarray*}
&&\prod_{\gamma\in\overline{\Gamma}(1)/\overline{\Gamma}(N)}
f(\mathrm{diag}(\tau_1,\tau_2,\ldots,\tau_{g-1},\gamma(\tau_g)))\\
&=&\prod_{\gamma\in\overline{\Gamma}(1)/\overline{\Gamma}(N)}\bigg(
c(\tau_1,\tau_2,\ldots,\tau_{g-1})
\prod_{t=1}^{n-1}g_t(\gamma(\tau_g))^{m_t(\tau_1,\tau_2,\ldots,\tau_{g-1})}
\bigg)\quad\textrm{by (\ref{prodf})}\\
&=&c(\tau_1,\tau_2,\ldots,\tau_{g-1})^d\prod_{t=1}^{n-1}\bigg(\prod_{\gamma\in\overline{\Gamma}(1)/\overline{\Gamma}(N)}
g_t(\gamma(\tau_g))
\bigg)^{m_t(\tau_1,\tau_2,\ldots,\tau_{g-1})},
\quad\textrm{where}~d=|\overline{\Gamma}(1)/\overline{\Gamma}(N)|\\
&=&c(\tau_1,\tau_2,\ldots,\tau_{g-1})^d\prod_{t=1}^{n-1}\mathrm{N}_{\mathbb{C}(X(N))/\mathbb{C}(X(1))}
(g_t(\tau_g))^{m_t(\tau_1,\tau_2,\ldots,\tau_{g-1})}\\
&&\textrm{due to the fact}~\mathrm{Gal}(\mathbb{C}(X(N))/\mathbb{C}(X(1)))\simeq\overline{\Gamma}(1)/\overline{\Gamma}(N)\\
&=&c(\tau_1,\tau_2,\ldots,\tau_{g-1})^d\prod_{t=1}^{n-1}c_t^{m_t(\tau_1,\tau_2,
\ldots,\tau_{g-1})}\quad\textrm{for some}~c_1,c_2,\cdots,c_{n-1}\in\mathbb{C}^\times~\textrm{by Remark \ref{V_1}},
\end{eqnarray*}
which is a modular unit of level $N$ as a function of each $\tau_k$ ($k=1,2,\cdots,g-1$) by
Lemma \ref{each}.
It follows from (\ref{prodf}) that
\begin{eqnarray*}
&&f(\mathrm{diag}(\tau_1,\tau_2,\ldots,\tau_g))^d/\prod_{\gamma\in\overline{\Gamma}(1)/\overline{\Gamma}(N)}
f(\mathrm{diag}(\tau_1,\tau_2,\ldots,\tau_{g-1},\gamma(\tau_g)))\\
&=&
\bigg(c(\tau_1,\tau_2,\ldots,\tau_{g-1})\prod_{t=1}^{n-1}g_t(\tau_g)^{m_t(\tau_1,\tau_2,\ldots,\tau_{g-1})}\bigg)^d/
\bigg(c(\tau_1,\tau_2,\ldots,\tau_{g-1})^d\prod_{t=1}^{n-1}c_t^{m_t(\tau_1,\tau_2,\ldots,\tau_{g-1})}\bigg)\\
&=&\prod_{t=1}^{n-1}(c_t^{-1}g_t(\tau_g)^d)^{m_t(\tau_1,\tau_2,\ldots,\tau_{g-1})}.\label{divide}
\end{eqnarray*}
Now, set this function to be $h(\tau_1,\tau_2,\ldots,\tau_g)$ which is a modular unit as a function of each $\tau_k$ ($k=1,2,\ldots,g$).
\par
On the other hand, when $\tau_g\in\mathbb{H}$ is fixed,
the image of the holomorphic function
\begin{eqnarray}
\varphi~:~\mathbb{H}^{g-1}&\rightarrow&\mathbb{C}^\times\label{original}\\
(\tau_1,\tau_2,\ldots,\tau_{g-1})&\mapsto&h(\tau_1,\tau_2,\ldots,\tau_g)=
\prod_{t=1}^{n-1}(c_t^{-1}g_t(\tau_g)^d)^{m_t(\tau_1,\tau_2,\ldots,\tau_{g-1})}\nonumber
\end{eqnarray}
is a countable set, because $m_t(\tau_1,\tau_2,\ldots,\tau_{g-1})$ ($t=1,2,\ldots,n-1$) are integer-valued functions.
Let $\ell$ be an index in $\{1,2,\ldots,g-1\}$ and
suppose that $\tau_1,\tau_2,\ldots,\tau_{g-1}$ are fixed except for $\tau_\ell$. Then $\varphi$
can be viewed as a holomorphic map from $\mathbb{H}$ to $\mathbb{C}^\times$
with respect to $\tau_\ell$. Since its image is a countable set as mentioned above,
the modular unit $h(\tau_1,\tau_2,\ldots,\tau_g)$, as a function of $\tau_\ell$,
must be a constant by Lemma \ref{constsurj}.
This observation essentially indicates that the map $\varphi$
defined on $\mathbb{H}^{g-1}$ in (\ref{original}) is in fact a
constant, and hence
the function $h(\tau_1,\tau_2,\ldots,\tau_g)$ of $g$ variables is a function of $\tau_g$.
Moreover, since $g_1(\tau),g_2(\tau),\ldots,g_{n-1}(\tau)$ form a basis for
the free abelian group $V_N/\mathbb{C}^\times$,
the integer-valued functions $m_t(\tau_1,\tau_2,\ldots,\tau_{g-1})$ $(t=1,2,\ldots,n-1 $) should be fixed integers, say $m_t$.
Thus if we set $v_g(\tau)=\prod_{t=1}^{n-1}g_t(\tau)^{m_t}\in V_N$, then we derive from (\ref{prodf}) that
\begin{equation}\label{deduce}
f(\mathrm{diag}(\tau_1,\tau_2,\ldots,\tau_g))=c(\tau_1,\tau_2,\ldots,\tau_{g-1})v_g(\tau_g).
\end{equation}
\par
The only property of
$f(\mathrm{diag}(\tau_1,\tau_2,\ldots,\tau_g))$
necessary to have (\ref{deduce}) is that
it is a
meromorphic modular function of level $N$
as a function of each $\tau_k$ ($k=1,2,\ldots,g$). Now that
$c(\tau_1,\tau_2,\ldots,\tau_{g-1})$ retains this property,
if we apply the same argument to $c(\tau_1,\tau_2,\ldots,\tau_{g-1})$ instead of $f(\mathrm{diag}(\tau_1,\tau_2,\ldots,\tau_g))$
and repeat this process over and over again, then we eventually reach the conclusion after $(g-1)$ steps.
\end{proof}

\begin{example}\label{thetaconstant}
Let $g,N\geq1$.
For $\mathbf{r}=\left[\begin{matrix}r_1\\\vdots\\r_g\end{matrix}\right],
\mathbf{s}=\left[\begin{matrix}s_1\\\vdots\\s_g\end{matrix}\right]\in\mathbb{Q}^g$ we
define a theta constant by
\begin{equation*}
\Theta_{\left[\begin{smallmatrix}\mathbf{r}\\\mathbf{s}\end{smallmatrix}\right]}(Z)=
\sum_{\mathbf{n}\in\mathbb{Z}^g}
e({}^t(\mathbf{n}+\mathbf{r})Z(\mathbf{n}+\mathbf{r})/2+
{}^t(\mathbf{n}+\mathbf{r})\mathbf{s})\quad(Z\in\mathbb{H}_g),
\end{equation*}
where $e(z)=e^{2\pi iz}$ for $z\in\mathbb{C}$. We further set
\begin{equation*}
\Phi_{\left[\begin{smallmatrix}\mathbf{r}\\\mathbf{s}\end{smallmatrix}\right]}(Z)=
\Theta_{\left[\begin{smallmatrix}\mathbf{r}\\\mathbf{s}\end{smallmatrix}\right]}(Z)/
\Theta_{\left[\begin{smallmatrix}\mathbf{0}\\\mathbf{0}\end{smallmatrix}\right]}(Z)\quad(Z\in\mathbb{H}_g),
\end{equation*}
which is a Siegel modular function of level $2N^2$ \cite[Proposition 7]{Shimura2}.
\par
Now, we assume that $g\geq2$, $Z'\in\mathbb{H}_{g-1}$ and $\tau\in\mathbb{H}$.
Then we derive that
\begin{eqnarray}
&&\Theta_{\left[\begin{smallmatrix}\mathbf{r}\\\mathbf{s}\end{smallmatrix}\right]}
\bigg(\left[\begin{matrix}Z'&0\\0&\tau\end{matrix}\right]\bigg)\nonumber\\
&=&
\sum_{\mathbf{n}=\left[\begin{smallmatrix}n_1\\\vdots\\n_g\end{smallmatrix}\right]\in\mathbb{Z}^g}
e\bigg(\frac{1}{2}{^t}\left[\begin{matrix}\mathbf{n}'+\mathbf{r}'\\n_g+r_g\end{matrix}\right]
\left[\begin{matrix}Z'&0\\0&z\end{matrix}\right]
\left[\begin{matrix}\mathbf{n}'+\mathbf{r}'\\n_g+r_g\end{matrix}\right]
+{^t}\left[\begin{matrix}\mathbf{n}'+\mathbf{r}'\\n_g+r_g\end{matrix}\right]
\left[\begin{matrix}\mathbf{s}'\\s_g\end{matrix}\right]\bigg),
\quad\textrm{where}~\mathbf{n}'=\left[\begin{matrix}n_1\\\vdots\\n_{g-1}\end{matrix}\right]\nonumber\\
&=&\sum_{\mathbf{n}'\in\mathbb{Z}^{g-1}}\sum_{n_g\in\mathbb{Z}}
e(^{t}(\mathbf{n}'+\mathbf{r}')Z'(\mathbf{r}'+\mathbf{s}')/2+(n_g+r_g)\tau(n_g+r_g)/2
+^{t}(\mathbf{n}'+\mathbf{r}')\mathbf{s}'+(n_g+r_g)s_g)\nonumber\\
&=&
\bigg(\sum_{\mathbf{n}'\in\mathbb{Z}^{g-1}}
e(^{t}(\mathbf{n}'+\mathbf{r}')Z'(\mathbf{r}'+\mathbf{s}')/2
+^{t}(\mathbf{n}'+\mathbf{r}')\mathbf{s}')\bigg)
\bigg(\sum_{n_g\in\mathbb{Z}}
e((n_g+r_g)\tau(n_g+r_g)/2+(n_g+r_g)s_g)\bigg)\nonumber\\
&=&\Theta_{\left[\begin{smallmatrix}\mathbf{r}'\\\mathbf{s}'\end{smallmatrix}\right]}(Z')
\Theta_{\left[\begin{smallmatrix}r_g\\s_g\end{smallmatrix}\right]}(\tau).\label{induc}
\end{eqnarray}
Applying this argument inductively we obtain
\begin{equation*}
\Phi_{\left[\begin{smallmatrix}\mathbf{r}\\\mathbf{s}\end{smallmatrix}\right]}(
\mathrm{diag}(\tau_1,\tau_2,\ldots,\tau_g))=\prod_{k=1}^g
\Phi_{\left[\begin{smallmatrix}r_k\\s_k\end{smallmatrix}\right]}(\tau_k)
\quad(\mathrm{diag}(\tau_1,\tau_2,\ldots,\tau_g)\in\mathbb{H}_g^\mathrm{diag}).
\end{equation*}
\par
On the other hand,
it follows from the Jacobi triple product identity \cite[(17.3)]{Fine} and the definition (\ref{Siegel}) in $\S$\ref{sectwo} that
\begin{equation*}
\Phi_{\left[\begin{smallmatrix}r\\s\end{smallmatrix}\right]}(\tau)
=\left\{
\begin{array}{ll}
e((2rs+r-s)/4)g_{\left[\begin{smallmatrix}1/2-r\\1/2-s\end{smallmatrix}\right]}(\tau)/
g_{\left[\begin{smallmatrix}1/2\\1/2\end{smallmatrix}\right]}(\tau) & \textrm{if}~
\left[\begin{smallmatrix}r\\s\end{smallmatrix}\right]\in\mathbb{Q}^2-(1/2+\mathbb{Z})^2,\\
0 & \textrm{if}~\left[\begin{smallmatrix}r\\s\end{smallmatrix}\right]\in(1/2+\mathbb{Z})^2.
\end{array}\right.
\end{equation*}
Therefore we conclude that
$\Phi_{\left[\begin{smallmatrix}\mathbf{r}\\\mathbf{s}\end{smallmatrix}\right]}(
\mathrm{diag}(\tau_1,\tau_2,\ldots,\tau_g))$ has no zeros and poles on $\mathbb{H}_g^\mathrm{diag}$, or is identically zero.
\end{example}

\bibliographystyle{amsplain}

\address{
Department of Mathematical Sciences \\
KAIST \\
Daejeon 305-701 \\
Republic of Korea} {zandc@kaist.ac.kr\\jkkoo@math.kaist.ac.kr}
\address{
Department of Mathematics\\
Hankuk University of Foreign Studies\\
Yongin-si, Gyeonggi-do 449-791\\
Republic of Korea } {dhshin@hufs.ac.kr}

\end{document}